\date{}
\newtheorem{proposition}{Proposition}
\newtheorem{theorem}{THEOREM}
\newtheorem{corollary}{Corollary}
\newtheorem{lemma}{Lemma}
\newtheorem{remark}{Remark}
\begin{document}

\title{{\bf Interpolation of Ces{\`a}ro sequence and function spaces}}

\author {Sergey V. Astashkin\thanks{Research partially supported by
RFBR grant no. 12-01-00198-a.} \,{\small and} Lech Maligranda}

\date{}

\maketitle

\renewcommand{\thefootnote}{\fnsymbol{footnote}}

\footnotetext[0]{2000 {\it Mathematics Subject Classification}:
46E30, 46B20, 46B42}
\footnotetext[0]{{\it Key words and phrases}: Ces{\`a}ro sequence and function spaces, weighted 
Ces{\`a}ro function spaces, Copson sequence and function spaces, interpolation, K-functional, 
K-method of interpolation}

\vspace{-9 mm}

\begin{abstract}
\noindent {\footnotesize The interpolation property of Ces{\`a}ro sequence and function spaces is investigated.
It is shown that $Ces_p(I)$ is an interpolation space between $Ces_{p_0}(I)$ and $Ces_{p_1}(I)$
for $1 < p_0 < p_1 \leq \infty$ and $1/p = (1 - \theta)/p_0 + \theta /p_1$ with $0 < \theta < 1$, where
$I = [0, \infty)$ or $[0, 1]$. The same result is true for Ces{\`a}ro sequence spaces. On the other hand,
$Ces_p[0, 1]$ is not an interpolation space between $Ces_1[0, 1]$ and $Ces_{\infty}[0, 1]$.

}
\end{abstract}

\begin{center}
{\bf 1. Introduction and preliminaries}
\end{center}

Structure of the Ces{\`a}ro sequence and function spaces was investigated by several authors (see, for
example, \cite{Be}, \cite{MPS} and \cite{A}, \cite{AM} and references therein). Here we are interested in studying 
interpolation properties of Ces{\`a}ro sequence
and function spaces. The main purpose is to give interpolation theorems for the Ces{\`a}ro sequence
spaces $ces_p$ and Ces{\`a}ro function spaces $Ces_{p}(I)$ on $I = [0, \infty)$ and $I = [0, 1]$. In the
case of $I = [0, \infty)$ some interpolation results for Ces{\`a}ro function spaces are contained implicitely
in \cite{MS}. Moreover, using the so-called $K^+$-method of interpolation it was proved in \cite{CFM}
that Ces{\`a}ro sequence space $ces_p$ is an interpolation space with respect to the couple $(l_1, l_1(2^{-k}))$.
Our main aim is to give a rather complete description of Ces{\`a}ro spaces as interpolation spaces with respect
to appropriate couples of weighted $L_1$-spaces as well as Ces{\`a}ro spaces. For example, if either $I = [0, \infty)$ 
or $[0, 1]$ and $1 < p_0 < p_1 \leq \infty$ with $\frac{1}{p} = \frac{1 - \theta}{p_0} + \frac{\theta}{p_1}$ for $0 < \theta < 1$,
then
\begin{equation} \label{equation1}
\left( Ces_{p_0}(I), Ces_{p_1}(I) \right)_{\theta, p} = Ces_p(I) ~~{\rm and} ~~ \left(ces_{p_0},
ces_{p_1} \right)_{\theta, p} = ces_p,
\end{equation}

\vspace{-2mm}

\noindent
where $(\cdot, \cdot)_{\theta, p}$ denotes the K-method of interpolation.

We have a completely different situation in a more interesting
and non-trivial case when $I = [0, 1]$ and $p_0 = 1,$ $p_1 = \infty$. It turns out that $Ces_p[0, 1]$ is not an 
interpolation space between the spaces $Ces_1[0, 1]$ and
$Ces_{\infty}[0, 1]$ and $\left( Ces_1[0, 1], Ces_{\infty}[0, 1] \right)_{\theta, p}$ for $1 < p < \infty$ is a
weighted Ces{\`a}ro function space.

Let us collect some necessary definitions and notations related to the interpolation theory of operators
as well as Ces{\`a}ro, Copson and down spaces.

For two normed spaces $X$ and $Y$ the symbol $X \stackrel {C} \hookrightarrow Y$ means that
the imbedding $X \subset Y$ is continuous with the norm which is not bigger than C, i.e.,
$\|x\|_{Y} \leq C \|x\|_{X}$ for all $x \in X$, and $X \hookrightarrow Y$ means that $X \stackrel {C} \hookrightarrow Y$
for some $C > 0$. Moreover, $X = Y$ means that $X \hookrightarrow Y$ and $Y \hookrightarrow X$, that is, the spaces 
are the same and the norms are equivalent.  If $f$ and $g$ are real functions, then the symbol $f \approx g$  means 
that $c^{-1}\, g \leq f \leq c\, g$ for some $c \geq 1$.

For a Banach couple $\bar{X} = (X_0, X_1)$ of two compatible Banach spaces $X_0$ and $X_1$ consider two Banach
spaces $X_0 \cap X_1$ and $X_0 + X_1$ with its natural norms
\begin{equation*}
 \| f \|_{X_0 \cap X_1} = \max \,( \| f \|_{X_0},  \| f \|_{X_1}) ~~ {\rm for} ~~ f \in  X_0 \cap X_1,
\end{equation*}
\vspace{-2mm}
and
\begin{equation*}
 \| f \|_{X_0 + X_1} = \inf \, \{ \| f_0 \|_{X_0}, + \| f_1 \|_{X_1}: f = f_0 + f_1, f_0 \in X_0, f_1 \in X_1\} ~~ {\rm for} ~~ f \in  X_0 + X_1.
\end{equation*}

\vspace{-2mm}

\noindent
For more careful definitions of a Banach couple, intermediate and interpolation spaces with some results introduced briefly below, see \cite[pp. 91-173, 289-314, 338-359]{BK} and \cite[pp. 95-116]{BS}.

A Banach space $X$ is called an {\it intermediate space} between $X_0$ and $X_1$ if $X_0 \cap X_1  \hookrightarrow X  \hookrightarrow X_0 + X_1$. Such a space $X$ is called an {\it interpolation space} between $X_0$ and $X_1$
(we write: $X\in Int(X_0, X_1)$) if, for any bounded linear operator $T: X_0 + X_1 \rightarrow X_0 + X_1$ such that the
restriction $T_{| X_i}: X_i \rightarrow X_i$ is bounded for $i = 0, 1$, the restriction $T_{|_X}: X \rightarrow X$ is also bounded and $\| T\|_{X \rightarrow X} \leq C \, \max \, \{ \| T\|_{X_0 \rightarrow X_0} , \| T\|_{X_1 \rightarrow X_1} \}$ for some $C \geq 1$. If $C = 1$, then $X$ is called an {\it exact interpolation space} between $X_0$ and $X_1$.

An {\it interpolation method} or {\it interpolation functor} ${\cal F}$ is a construction (a rule) which assigns to every Banach couple $\bar{X} = (X_0, X_1)$ an interpolation space ${\cal F}(\bar{X})$ between $X_0$ and $X_1$. The interpolation functor ${\cal F}$ is called {\it exact} if the space
${\cal F}(\bar{X})$ is an exact interpolation space for every Banach couple $\bar{X}$.
One of the most important interpolation methods is the {\it $K$-method} known also as the {\it real Lions-Peetre interpolation method}. For a Banach couple
$\bar{X} = (X_0, X_1)$ the {\it Peetre K-functional} of an element $f \in X_0+X_1$ is defined for $t > 0$ by
$$
K(t, f; X_0, X_1) = \inf \{ \| f_0\|_{X_0} + t \| f_1\|_{X_1}: f = f_0 + f_1, f_0 \in X_0, f_1 \in X_1 \}.
$$
Then the {\it spaces of the K-method of interpolation} are
$$
(X_0, X_1)_{\theta, p} = \{ f \in X_0+X_1: \| f \|_{\theta, p} = \Big(\int_0^{\infty} [t^{-\theta} K(t, f; X_0, X_1)]^p \frac{dt}{t}\Big)^{1/p}
< \infty \}
$$
if $0 < \theta < 1$ and $1 \leq p < \infty$, and
$$
(X_0, X_1)_{\theta, \infty} = \{ f \in X_0+X_1: \| f \|_{\theta, \infty} = \sup_{t > 0} \frac {K(t, f; X_0, X_1)}{t^{\theta} } < \infty \}
$$
if $0 \leq \theta \leq 1$. Very useful in calculations is the so-called {\it reiteration formula} showing the stability of the $K$-method
of interpolation. If $1 \leq p_0, p_1, p \leq \infty, 0 < \theta_0, \theta_1, \theta < 1$ and $\theta_0 \neq \theta_1$, then with
equivalent norms
\begin{equation} \label{equation2}
\big( (X_0, X_1)_{\theta_0, p_0},  (X_0, X_1)_{\theta_1, p_1} \big)_{\theta, p} =  (X_0, X_1)_{\eta, p},
\end{equation}
where $\eta = (1 - \theta) \theta_0 + \theta \theta_1$ (see \cite[Theorem 2.4, p. 311]{BS}, \cite[Theorems 3.5.3]{BL},  \cite[Theorem 3.8.10]{BK}) and \cite[Theorem 1.10.2]{Tr}).

The space $(X_0, X_1)_{\Phi}^K$ of the {\it general $K$-method of interpolation}, where $\Phi$ is a {\it parameter of the 
$K$-method}, i.e., a Banach function space  on $((0, \infty), dt/t)$ containing the function $t \longmapsto \min \{1, t\}$, is 
the Banach space of all $f \in X_0 + X_1$ such that $K(\cdot, f; X_0, X_1) \in \Phi$ with the norm 
$\| f\|_{K_{\Phi}} = \| K(\cdot, f; X_0, X_1) \|_{\Phi}$. The space $(X_0, X_1)_{\Phi}^K$ is an exact interpolation space 
between $X_0$ and $X_1$.

In particular, if $L_p=L_p(\Omega,\mu),$ where $(\Omega,\mu)$ is a complete $\sigma$-finite measure space, then 
for any $f \in L_1 + L_{\infty}$ we have
\begin{equation}
\label{K-functional}
K(t, f; L_1, L_{\infty}) = \int_0^t \, f^*(s)\, ds.
\end{equation}

\vspace{-2mm}

\noindent
Here and next $f^*$ denotes the non-increasing rearrangement of $|f|$ defined by
$f^*(s) = \inf \{\lambda > 0: \mu(\{x \in \Omega: |f(x)| > \lambda \}) \leq s \} $ (see \cite[Proposition 3.1.1]{BK}, \cite[pp. 78-79]{KPS}, \cite[Theorem 6.2, pp. 74-75]{BS}). Moreover, for two non-negative weight functions $w_0, w_1$ and for $f \in L_1(w_0) + L_1(w_1)$ we have
\begin{equation}
\label{K-functional 1}
K(t, f; L_1(w_0), L_1(w_1)) = \| \min (w_0, t w_1 ) \, f\, \|_{L_1}
\end{equation} (see \cite[Proposition 3.1.17]{BK} and \cite[p. 391]{Ov}).

If the inequality $K(t, g; X_0, X_1) \leq K(t, f; X_0, X_1)$ $(t > 0)$ 
with $f \in X$ and $g \in X_0 + X_1$ implies that $g \in X$
and $\| g \|_X \leq C \, \| f \|_X$ for arbitrary $X\in Int(X_0,X_1)$ and some $C \geq 1$ independent of $X$, $f$ and $g$, 
then $(X_0, X_1)$ is called a
{\it $K$-monotone} or {\it Calder\'on-Mityagin couple}. For arbitrary $K$-monotone couple $(X_0, X_1)$
the spaces $(X_0, X_1)_{\Phi}^K$ of the general $K$-method are the only interpolation spaces between $X_0$ and $X_1$ (see \cite{BK}).

Now, to treat interpolation results for Ces{\`a}ro spaces we need to define these spaces.
The {\it Ces{\`a}ro sequence spaces} $ces_p$ are the sets of real sequences $x = \{x_k \}$
such that
$$
\| x \|_{ces(p)} = \left [ \sum_{n=1}^{\infty} \left (\frac{1}{n} \, \sum_{k=1}^n |x_k| \right)^p \right ]^{1/p} < \infty,
~~~{\rm for} ~~~1 \leq p < \infty,
$$
and
$$
\| x \|_{ces(\infty)} = sup_{n \in \mathbb N} \, \frac{1}{n} \sum_{k=1}^n |x_k| < \infty,  ~~~ {\rm for} ~~~ p = \infty.
$$
The {\it Ces{\`a}ro function spaces} $Ces_p = Ces_p (I)$ are the classes of Lebesgue measurable real functions
$f$ on $I = [0,1]$ or $I = [0,\infty)$ such that
$$
\|f\|_{Ces(p)} = \left [ \int_I \left (\frac{1}{x}\int_{0}^{x} |f(t)| \, dt \right)^{p} ~dx \right]^{1/p} < \infty,
 ~~~{\rm for} ~~~1 \leq p < \infty,
$$
and
$$
\|f\|_{Ces(\infty)} = \sup_{0 < x \in I} ~\frac{1}{x}\int_{0}^{x}
|f(t)|~dt < \infty, ~~~ {\rm for} ~~~ p = \infty.
$$

The Ces{\`a}ro spaces are Banach lattices which are not symmetric except as they are trivial, namely,
$ces_1 = \{0\}, Ces_1[0, \infty) = \{0\}$. By a {\it symmetric space} we
mean a Banach lattice $X$ on $I$ satisfying the additional property: if $g^*(t)=f^*(t)$ for all $t>0$,
$f \in X$ and $g \in L^0(I)$ (the set of all classes of Lebesgue measurable real functions on $I$) then $g \in X$ 
and $\| g \|_X = \| f\|_X$ (cf. \cite{BS}, \cite{KPS}).
Moreover, $l_p  \stackrel {p^{\prime}} \hookrightarrow ces_p, L_p(I)  \stackrel {p^{\prime}} \hookrightarrow Ces_p(I)$ for $ 1 < p \leq \infty$
(in what follows $\frac{1}{p}+\frac{1}{p'}=1$),
and if $1 < p < q < \infty$, then $ces_p  \stackrel {1} \hookrightarrow ces_q  \stackrel {1} \hookrightarrow ces_{\infty}$. Also for $I = [0, 1]$ and $1 < p < q < \infty$ we have $L_{\infty} \stackrel {1} \hookrightarrow  Ces_{\infty} \stackrel {1} \hookrightarrow Ces_q \stackrel {1} \hookrightarrow Ces_p \stackrel {1} \hookrightarrow Ces_1 = L_1(\ln 1/t)$ and $Ces_{\infty} \stackrel {1} \hookrightarrow L_1$.

Let $1 \leq p < \infty$. The {\it Copson sequence spaces} $cop_p$ are the sets of real sequences $x = \{x_k \}$
such that
$$
\| x \|_{cop(p)} = \left [ \sum_{n=1}^{\infty} \left ( \sum_{k=n}^{\infty} \frac{|x_k|}{k} \right)^p \right ]^{1/p} < \infty,
$$
and the {\it Copson function spaces} $Cop_p = Cop_p (I)$ are the classes of Lebesgue measurable real functions
$f$ on $I = [0,\infty)$ or $I = [0,1]$ such that
$$
\|f\|_{Cop(p)} = \left [ \int_0^{\infty} \left ( \int_{x}^{\infty} \frac{|f(t)|}{t} \, dt \right)^{p} ~dx \right]^{1/p} < \infty
 ~~~{\rm for} ~~~I = [0, \infty),
$$
and
$$
\|f\|_{Cop(p)} =  \left [ \int_0^1 \left (\int_{x}^1 \frac{|f(t)|}{t} \, dt \right)^{p} ~dx \right]^{1/p} < \infty
 ~~~{\rm for} ~~~I = [0, 1].
$$
Sometimes we will use the {\it Ces{\`a}ro operators}
$C_d\,x(n) = \frac{1}{n} \sum_{k=1}^n |x_k|$, $Cf(x) = \frac{1}{x}\int_{0}^{x} |f(t)|\, dt$
and the {\it Copson operators}
$C_d^*x(n) = \sum_{k=n}^{\infty} \frac{|x_k|}{k}$, $C^*f(x) = \int_{(x, \infty) \cap I} \frac{|f(t)|}{t}\, dt$
related to introduced spaces.
Then $ces_p$ (resp.  $cop_p$) consists of all real sequences
$x = \{x_k \}$ such that $C_d\,x \in l_p$ (resp. $C^*_d\,x \in l_p$) and $Ces_p (I)$
(resp. $Cop_p (I)$) consists of all classes of Lebesgue measurable real functions $f$ on $I$ such that
$Cf \in L_p(I)$ ( resp. $C^*f \in L_p(I)$) with natural norms.
By the Copson inequalities (cf. \cite[Theorems 328 and 331]{HLP}, \cite[p. 25]{Be} and \cite[p. 159]{KMP}),
which are valid for $1 \leq p < \infty,$ we have: $\| C^*_d\,x \|_{l_p} \leq p\, \| x \|_{l_p}$ for
$x \in l_p$ and $\| C^* f \,\|_{L_p(I)} \leq p\, \| f \,\|_{L_p(I)} $ for $f \in L_p(I).$
Therefore, $l_p \stackrel {p} \hookrightarrow cop_p, L_p  \stackrel {p} \hookrightarrow Cop_p$.

We can define similarly the spaces $cop_{\infty}$ and $Cop_{\infty}$ but then it is easy to see that $cop_{\infty} = l_1(1/k)$
and $Cop_{\infty} = L_1(1/t)$. Moreover, for $I = [0, 1]$ we have $L_p \stackrel {p} \hookrightarrow Cop_p \stackrel {1} \hookrightarrow
Cop_1 = L_1$.

We will consider also more general Ces{\`a}ro spaces $Ces_E (I)$, where $E$ is a Banach function space on
$I$ with the natural norm $\| f\|_{Ces(E)} = \| Cf \|_E$.

For a Banach function space $E$ on $I = [0, \infty)$ the {\it down space} $E^{\downarrow}$ is the
collection of all $f \in L^0$ such that the norm
$$
\| f \|_{E^{\downarrow}} = \sup \int_I |f(t)| g(t) \, dt < \infty,
$$

\vspace{-2mm}

\noindent
where the supremum is taken over all non-negative, non-increasing Lebesgue measurable functions $g$
from the K\"othe dual $E'$ of $E$ such that $ \| g \|_{E^{\prime}} \leq 1$.
Let us remind that the K\"othe dual of a Banach function space $E$ is defined as
$$
E^{\prime} = \{ f \in L^0: \| f \|_{E^{\prime}} =  \sup_{\| g \|_E \leq 1} \int_I |f(t) g(t)| \, dt < \infty \}.
$$

\vspace{-2mm}

\noindent
It is routine to check that the space $E^{\downarrow}$ has the Fatou property, that is, if $0\le f_n$ increases
to $f$ a.e. on $I$ and $\sup_{n\in\mathbb N}\|f_n\|_{E^{\downarrow}}<\infty,$ then $f\in E^{\downarrow}$ and $\|f_n\|_{E^{\downarrow}}$
increases to $\|f\|_{E^{\downarrow}}.$ Moreover, $E''  \stackrel {1} \hookrightarrow E^{\downarrow}$, where $E''$ is the second K\"othe dual of $E$.
Recall also that a Banach function space $E$ has the Fatou property if and only if $E = E''$ with the equality of the norms.

Sinnamon (\cite{Si01}, Theorem 3.1) proved that if $E$ is a symmetric space on $I = [0, \infty)$,
then $\| f \|_{E^{\downarrow}} \approx \| Cf \|_E$ if and only if the Ces{\`a}ro operator $C: E \rightarrow E$ is
bounded. In particular, then $E^{\downarrow} = Ces_E$. Moreover, $(L_1)^{\downarrow} = L_1$ since
$$
\| f \|_{L_1^{\downarrow}} = \sup_{0 \leq g} \frac{\int_0^{\infty} |f(t)| g(t)\, dt }{\| g\|_{L_{\infty}}} \geq \sup_{0 \leq g \downarrow}
\frac{\int_0^{\infty} |f(t)| g(t)\, dt }{\| g\|_{L_{\infty}}} \geq \frac{\int_0^{\infty} |f(t)| \, dt }{\| 1 \|_{L_{\infty}}} = \| f \|_{L_1}
$$
(cf. \cite{MS}, p. 194).

The paper is organized as follows. In Section 2 we proved that the
Ces{\`a}ro and Copson sequence and function spaces on $[0, \infty)$ are interpolation spaces obtained by the $K$-method from weighted $L_1$-spaces. At the same time, in the case of $I = [0, 1]$, the $K$-method gives only the Copson spaces as interpolation spaces with respect to analogous couple of weighted $L_1$-spaces (see Theorem 1(iii)). In particular, we obtain a new description of the interpolation spaces $(L_1, L_1(1/t))_{1-1/p, p}$ in off-diagonal case both for $I = [0,\infty)$ and $I = [0, 1]$.

In Section 3 it is shown that the Ces{\`a}ro function spaces $Ces_p[0, \infty), 1 < p< \infty$ can be obtained by the $K$-method of interpolation also from the couple $(L_1[0, \infty), Ces_{\infty}[0, \infty))$. Hence, applying the reiteration theorem, we conclude that $Ces_p[0, \infty)$ are interpolation spaces  with respect to the couple $( Ces_{p_0}[0, \infty), Ces_{p_1}[0, \infty))$ for arbitrary $1 < p_0 < p_1 \leq \infty$ and $\frac{1}{p} = \frac{1-\theta}{p_0} + \frac{\theta}{p_1}$ with $0 < \theta < 1$.

In Section 4 the interpolation of Ces{\`a}ro function spaces on the segment $[0, 1]$ is investigated. We prove that for $1 < p < \infty$
\begin{equation*}
\left( L_1(1-t)[0, 1], Ces_{\infty}[0,1] \right)_{\theta, p} = Ces_p[0, 1] ~~{\rm with} ~~ \theta = 1 - 1/p.
\end{equation*}
As a consequence of this result and reiteration equality (\ref{equation2}), we infer
\begin{equation} \label{equation3}
\left(Ces_{p_0}[0, 1], Ces_{p_1}[0,1] \right)_{\theta, p} = Ces_p[0, 1]
\end{equation}
for all $1 < p_0 < p_1 \leq \infty$ and  $\frac{1}{p} = \frac{1-\theta}{p_0} + \frac{\theta}{p_1}$ with $0 < \theta < 1$.

Our important part of interest is to look on interpolation spaces between the spaces $Ces_1[0, 1]$ and $Ces_{\infty}[0, 1]$.
In Section 5, in Theorem 3, we find an equivalent expression for the $K$-functional with respect to this couple and then in Section 6 we proved that
the real interpolation spaces $(Ces_1[0, 1], Ces_{\infty}[0, 1])_{1-1/p, p}$ for $1 < p< \infty$ can be identified with the weighted
 Ces{\`a}ro function spaces $Ces_p(\ln e/t)[0, 1]$.

 Finally, in Section 7, we show in Theorem 6 that $Ces_p[0, 1]$ for $1 < p < \infty$ are not interpolation spaces between the spaces $Ces_1[0, 1]$ and $Ces_{\infty}[0, 1]$.

\vspace{3 mm}
\begin{center}
{\bf 2. Ces{\`a}ro and Copson spaces as interpolation spaces with respect to weighted $L_1$-spaces}
\end{center}

We start with the main result in this part.

\begin{theorem} \label{Thm1} (i) If $1 < p < \infty$, then
\begin{equation*}
(l_1, l_1(1/k) )_{1-1/p, p} = ces_p = cop_p.
\end{equation*}
\begin{itemize}
\item[$(ii)$] If $I = [0, \infty)$ and $1 < p < \infty$, then
$$
(L_1, L_1(1/t) )_{1-1/p, p} = Ces_p = Cop_p.
$$
\item[$(iii)$] If $I = [0, 1]$ and $1 < p \leq \infty$, then
$$
(L_1, L_1(1/t) )_{1-1/p, p} = Cop_p.
$$
Moreover, $Cop_p  \stackrel {p^{\prime}} \hookrightarrow Ces_p$
and the reverse imbedding does not hold.
\end{itemize}
\end{theorem}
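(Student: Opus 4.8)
The whole theorem rests on the $K$-functional formula (\ref{K-functional 1}). Taking $w_0\equiv 1$, $w_1(t)=1/t$ and writing $s$ for the parameter, I would first record that
\[
K(s,f;L_1,L_1(1/t))=\int_I \min(1,s/t)\,|f(t)|\,dt=\int_{I\cap(0,s]}|f(t)|\,dt+s\int_{I\cap(s,\infty)}\frac{|f(t)|}{t}\,dt,
\]
so that on $I=[0,\infty)$ one has exactly $K(s,f)=s\,Cf(s)+s\,C^*f(s)$. The role of the exponent $\theta=1-1/p$ is that $1-\theta=1/p$, so that each summand collapses when inserted into the $(\theta,p)$-norm: since $[s^{-\theta}\cdot s\,Cf(s)]^{p}\,\frac{ds}{s}=Cf(s)^{p}\,ds$ and likewise for the Copson term, I obtain for $1<p<\infty$
\[
\|f\|_{1-1/p,p}^{p}\approx\int_0^{\infty}Cf(s)^{p}\,ds+\int_0^{\infty}C^*f(s)^{p}\,ds=\|f\|_{Ces(p)}^{p}+\|f\|_{Cop(p)}^{p}.
\]
This immediately gives $Ces_p\cap Cop_p=(L_1,L_1(1/t))_{\theta,p}$, so part $(ii)$ reduces to the identity $Ces_p=Cop_p$ on $[0,\infty)$.

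To prove $Ces_p=Cop_p$ I would argue through the two operators directly. Put $g=C^*f\ge 0$; then $g$ is non-increasing with $|f(t)|=-t\,g'(t)$, and integrating by parts $F(x):=\int_0^x|f|=-x\,g(x)+\int_0^x g$, whence $Cf=Cg-g\le Cg$ pointwise. The classical Hardy inequality $\|Cg\|_{L_p}\le p'\|g\|_{L_p}$ then yields $\|f\|_{Ces(p)}\le p'\|f\|_{Cop(p)}$, i.e. $Cop_p\stackrel{p'}{\hookrightarrow}Ces_p$. For the reverse I would integrate by parts the other way to get $C^*f=C^*(Cf)-Cf$ (the boundary term at $\infty$ vanishing for $f$ in the space), and then apply Copson's inequality $\|C^*(Cf)\|_{L_p}\le p\|Cf\|_{L_p}$ to conclude $\|f\|_{Cop(p)}\le(p+1)\|f\|_{Ces(p)}$. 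Part $(i)$ is handled in the same way, with the discrete analogue of (\ref{K-functional 1}) giving $K(s,x;l_1,l_1(1/k))=\sum_{k\le s}|x_k|+s\sum_{k>s}|x_k|/k$ and with $C,C^*$ and the two inequalities replaced by $C_d,C_d^*$ and their discrete versions.

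On $I=[0,1]$ the computation acquires one extra piece: for $s\ge 1$ every $t\in(0,1]$ satisfies $\min(1,s/t)=1$, so $K(s,f)=\|f\|_{L_1}$, contributing a tail $\int_1^{\infty}[s^{-\theta}\|f\|_{L_1}]^{p}\frac{ds}{s}\approx\|f\|_{L_1}^{p}$ in addition to the $Ces_p$ and $Cop_p$ parts arising from $s<1$. The embedding $Cop_p\stackrel{p'}{\hookrightarrow}Ces_p$ still holds verbatim, and the tail is controlled by the Copson norm through the Fubini identity $\|f\|_{L_1}=\int_0^1 C^*f(x)\,dx=\|C^*f\|_{L_1[0,1]}\le\|C^*f\|_{L_p[0,1]}$; together these collapse the three parts to $\|C^*f\|_{L_p}$, giving $(L_1,L_1(1/t))_{\theta,p}=Cop_p$, and the displayed embedding $Cop_p\stackrel{p'}{\hookrightarrow}Ces_p$ is exactly the one just proved (the case $p=\infty$ being similar with the sup-form of the $K$-method).

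The crux of the theorem, and what I expect to be the main obstacle, is the failure of the reverse embedding $Ces_p\hookrightarrow Cop_p$ on $[0,1]$: the integration-by-parts identity now carries a boundary term $\|f\|_{L_1}$ at the right endpoint which, unlike the term at $\infty$, need not be finite. This pinpoints the counterexample: take $f(x)=(1-x)^{-1-\gamma}$ with $0<\gamma<1/p$. Since $\int_0^1 f=\infty$ one has $C^*f\equiv\infty$, so $f\notin Cop_p$; yet $Cf(x)=\big((1-x)^{-\gamma}-1\big)/x$ lies in $L_p[0,1]$ (integrable at $x=1$ because $\gamma p<1$, bounded at $x=0$), so $f\in Ces_p\setminus Cop_p$ and the reverse imbedding cannot hold.
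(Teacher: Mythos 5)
Your proposal is correct, and its skeleton coincides with the paper's proof: both start from the weighted-$L_1$ $K$-functional formula \eqref{K-functional 1}, observe that $K(t,f)/t=Cf(t)+C^{*}f(t)$ (plus the extra tail term $\|f\|_{L_1}$ when $I=[0,1]$), and then reduce everything to the Hardy and Copson inequalities via the composition identities $CC^{*}f=C^{*}Cf=Cf+C^{*}f$ on $[0,\infty)$ --- which the paper derives by Fubini and you derive, equivalently, by integration by parts (your boundary-term checks are fine, though for the direction $Cop_p\hookrightarrow Ces_p$ you also need $t\,C^{*}f(t)\to 0$ as $t\to 0^{+}$, which follows from the monotonicity of $C^{*}f\in L_p$). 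Two points are genuinely different. First, in part (i) the paper proves only $(l_1,l_1(1/k))_{1-1/p,p}=cop_p$ and then quotes Bennett's theorem for the equality $ces_p=cop_p$, whereas you propose to run the continuous argument discretely, which is self-contained; note, however, that the discrete composition identity carries an index shift (the paper's own computation gives $C_dC_d^{*}x(n)=C_dx(n)+C_d^{*}x(n+1)$), so the translation is not literally ``verbatim'' and requires Abel summation with that shift --- workable, but glossed over in your sketch. Second, for the failure of $Ces_p\hookrightarrow Cop_p$ on $[0,1]$ the paper exhibits a family $f_h=(1-t)^{-1/2}\chi_{[h,1)}$ whose norm ratio $\|f_h\|_{Cop(p)}/\|f_h\|_{Ces(p)}$ blows up as $h\to 1$, while you exhibit a single function $f(x)=(1-x)^{-1-\gamma}$ with $0<\gamma<1/p$, which lies in $Ces_p$ while $C^{*}f\equiv\infty$ (your formula for $Cf$ is missing the harmless factor $1/\gamma$). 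Your example is simpler and stronger in that it shows the set-theoretic inclusion itself fails, producing an explicit element of $Ces_p\setminus Cop_p$; the paper's family instead shows quantitative non-comparability of the norms. Both settle the claim.
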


\begin{proof} (i) If $f \in l_1+l_1(1/k)$, then
$K(t, x; l_1, l_1(1/k)) = t \, \sum_{k=1}^{\infty} \frac{|x_k|}{k}$ for $0 < t \leq 1$,
and
$$
K(t, x; l_1, l_1(1/k)) = \sum_{k=1}^{\infty} |x_k| \min(1, \frac{t}{k}) =
\sum_{k=1}^{[t]} |x_k| + t \sum_{k= [t] + 1} ^{\infty} \frac{|x_k|}{k}.
$$
for $t \geq 1$. Therefore, for $n \leq t < n+1 ~(n \geq 1)$, we have
$$
\frac{K(t, x; l_1, l_1(1/k))}{t} \leq \frac{1}{n} \sum_{k=1}^n |x_k| + \sum_{k= n + 1} ^{\infty} \frac{|x_k|}{k}
= C_d x(n) + C_d^*\,x(n+1).
$$
Since
\begin{eqnarray*}
C_d C_d^*\,x(n)
&=&
\frac{1}{n} \sum_{m=1}^n \Big(\sum_{k=m}^{\infty} \frac{|x_k|}{k}\Big) =
\frac{1}{n} \left[ \sum_{k =1}^n \Big(\sum_{m=1}^k \frac{|x_k|}{k}\Big) + \sum_{k=n+1}^{\infty} \Big(\sum_{m=1}^n \frac{|x_k|}{k}\Big)
\right]\\
&=&
\frac{1}{n} \sum_{k=1}^n |x_k| + \sum_{k= n + 1} ^{\infty} \frac{|x_k|}{k} = C_d x(n) + C_d^*\,x(n+1),
\end{eqnarray*}
it follows that, for $n \leq t < n+1 ~(n \geq 1)$,
$$
\frac{K(t, x; l_1, l_1(1/k))}{t} \leq C_d C_d^*\,x(n).
$$
Using the classical Hardy inequality (cf. \cite[Theorem 326]{HLP} or \cite[Theorem 1]{KMP}), we obtain
\begin{eqnarray*}
\| x \|_{1-1/p, p}
&=&
\left( \int_0^\infty \Big(\frac{K(t, x; l_1, l_1(1/k))}{t}\Big)^p\, dt \right)^{1/p}\\
&=&
\left[ C_d^*\,x(1)^p + \sum_{n=1}^{\infty} \int_n^{n+1} \Big(\frac{K(t, x)}{t}\Big)^p\, dt  \right]^{1/p}\\
&\leq&
\left[ C_d^*\, x(1)^p + \sum_{n=1}^{\infty} \left( C_d C_d^*\,x(n) \right)^p \right]^{1/p}\\
&\leq &
C_d^*\,x(1) + \, \| C_dC_d^* \,x \|_{l_p} \leq C_d^* \,x(1) + p^{\prime}\, \| C_d^* \,x \|_{l_p}\\
&\leq&
(p^{\prime} +1) \| C_d^* \,x \|_{l_p} = (p^{\prime} +1) \| x \|_{cop(p)}.
\end{eqnarray*}
This means that $cop_p \hookrightarrow (l_1, l_1(1/k) )_{1-1/p, p}$.
On the other hand, for $n \leq t < n+1 ~(n \geq 1)$, we have
$$
\frac{K(t, x; l_1, l_1(1/k))}{t} \geq \sum_{k=n+1}^{\infty} \frac{|x_k|}{k} = C_d^* \,x(n+1)
$$
and
\begin{eqnarray*}
\| x \|_{1-1/p, p}
&=&
\left( \int_0^\infty \Big(\frac{K(t, x; l_1, l_1(1/k))}{t}\Big)^p\, dt \right)^{1/p}\\
&\geq&
\left( C_d^*\,x(1)^p + \sum_{n=1}^{\infty} C_d^* \,x(n+1)^p \right)^{1/p} = \| C_d^* \,x \|_{l_p} = \| x \|_{cop(p)},
\end{eqnarray*}
which gives the reverse imbedding $ (l_1, l_1(1/k) )_{1-1/p, p}  \stackrel {1} \hookrightarrow cop_p$.
The equality of the spaces $ces_p = cop_p$ for $ 1 < p < \infty$ was proved by Bennett (cf. \cite{Be}, Theorems
4.5 and 6.6).
\vspace{2mm}

(ii) For $f \in L_1+L_1(1/s) = L_1(\min(1, 1/s))$ we have
$$
K(t, f; L_1, L_1(1/s)) = \int_0^{\infty} |f(s)| \min(1, t/s)\, ds = \int_0^t |f(s)|\, ds +
t \int_t^{\infty} \frac{|f(s)|}{s}\, ds.
$$
Thus,
$$
\frac{K(t, f; L_1, L_1(1/s))}{t} = Cf(t) + C^*f(t),\;t>0,
$$
and therefore
\begin{equation}
\label{extra formula}
\| f \|_{1-1/p, p} = \left( \int_0^\infty \Big(\frac{K(t, f; L_1, L_1(1/s))}{t}\Big)^p\, dt \right)^{1/p} =
\| Cf + C^*f \|_{L_p(0, \infty)}.
\end{equation}
Since, by Fubini theorem,
\begin{eqnarray*}
C^*Cf(t)
&=&
\int_t^{\infty}\Big( \frac{1}{u^2} \int_0^u |f(s)|\, ds\Big) \, du \\
&=&
\int_0^t \Big(\int_t^{\infty} \frac{1}{u^2} du\Big) |f(s)| \, ds + \int_t^{\infty} \Big(\int_s^{\infty} \frac{1}{u^2} du\Big) |f(s)|\, ds\\
&=&
\frac{1}{t} \int_0^t |f(s)|\, ds + \int_t^{\infty} \frac{|f(s)|}{s}\, ds = Cf(t) + C^*f(t),
\end{eqnarray*}
from the Copson inequality (cf. \cite[Theorem 328]{HLP}) it follows that
\begin{eqnarray*}
\| f \|_{Ces(p)}
&=&
\| C f \|_{L_p(0, \infty)} \leq \| Cf + C^*f \|_{L_p(0, \infty)} \\
&=&
\| C^* Cf  \|_{L_p(0, \infty)} \leq p \| Cf  \|_{L_p(0, \infty)}
=  p  \| f  \|_{Ces(p)}.
\end{eqnarray*}
Combining this with \eqref{extra formula}, we obtain $\| f \|_{1-1/p, p} \approx  \| f  \|_{Ces(p)}.$

On the other hand, since
\begin{eqnarray*}
CC^*f(t)
&=&
\frac{1}{t} \int_0^t \Big(\int_u^{\infty} \frac{|f(s)|}{s} \, ds\Big) \, du \\
&=&
\frac{1}{t} \int_0^t \Big(\int_0^s du\Big) \frac{|f(s)|}{s} \, ds + \frac{1}{t} \int_t^{\infty} \Big(\int_0^t du\Big) \frac{|f(s)|}{s}\, ds\\
&=&
\frac{1}{t} \int_0^t |f(s)|\, ds + \int_t^{\infty} \frac{|f(s)|}{s}\, ds = Cf(t) + C^*f(t),
\end{eqnarray*}
then, by Hardy inequality,
\begin{eqnarray*}
\| f \|_{Cop(p)}
&=&
\| C^* f \|_{L_p(0, \infty)} \leq \| Cf + C^*f \|_{L_p(0, \infty)} \\
&=&
\| C C^*f  \|_{L_p(0, \infty)} \leq p^{\prime} \| C^*f  \|_{L_p(0, \infty)}
=  p^{\prime} \| f  \|_{Cop(p)},
\end{eqnarray*}
and,  applying \eqref{extra formula} once more, we conclude that $\| f \|_{1-1/p, p} \approx  \| f  \|_{Cop(p)}.$

(iii) For $I = [0, 1]$ and $f \in L_1+L_1(1/s) = L_1$ we have
$K(t, f; L_1, L_1(1/s)) = \| f \|_1 $ for $t \geq 1$
and
$$
K(t, f; L_1, L_1(1/s)) = \int_0^t |f(s)| \, ds + t \int_t^1 \frac{|f(s)|}{s} \, ds = t Cf(t) + t C^*f(t)
$$
for $0 < t \leq 1$. Therefore, for $1 < p < \infty$
\begin{eqnarray*}
\| f \|_{1-1/p, p}
&=&
\left( \int_0^1 [ Cf(t) + C^*f(t) ]^p\, dt + \int_1^{\infty} t^{-p} \| f \|_1^p\, dt \right)^{1/p}\\
&=&
\left( \| Cf + C^*f \|_p^p + \frac{1}{p-1} \| f \|_1^p \right)^{1/p}.
\end{eqnarray*}
Firstly, the last expression is not smaller than $\| C^*\,f \|_p = \| f \|_{Cop(p)}$. On the other hand, since again
$CC^*f(t) = Cf(t) + C^*f(t), $ by Hardy inequality, it follows that
\begin{eqnarray*}
\| f \|_{1-1/p, p}
&=&
\left( \| C C^*f \|_p^p + \frac{1}{p-1} \| f \|_1^p \right)^{1/p}  \leq
\| C C^*f \|_p + (p-1)^{-1/p} \| f \|_1 \\
&\leq&
p^{\prime} \| C^*f \|_p + (p-1)^{-1/p} \| f \|_{Cop(p)} =(p^{\prime} + (p-1)^{-1/p}) \,  \| f \|_{Cop(p)}.
\end{eqnarray*}
Thus, $(L_1, L_1(1/t) )_{1-1/p, p} = Cop_p$ with equivalent norms for $1 < p < \infty$. For $p = \infty$
we have $(L_1, L_1(1/t) )_{1, \infty} = L_1(1/t)=Cop_{\infty}[0, 1]$.

The imbedding  $Cop_p  \stackrel {p^{\prime}} \hookrightarrow Ces_p$ for $ 1 <p  \leq \infty$ follows from
the inequality
$$
\| f \|_{Ces(p)} = \| Cf \|_p \leq \| Cf + C^*f \|_p = \| C C^*f \|_p \leq p^{\prime} \| C^*f \|_p = p^{\prime} \| f\|_{Cop(p)}.
$$
Moreover, $Ces_p[0, 1] \cap L_1[0, 1]  \stackrel {p+1} \hookrightarrow Cop_p[0, 1]$ for
$ 1 \leq p < \infty$. In fact, observe that in the case of $I = [0, 1]$ the composition operator $C^* C$ has an additional term.
More precisely,
$$
C^*C f(t) = Cf(t) + C^*f(t) -\int_0^1 |f(s)|\, ds.
$$
Therefore,
\begin{eqnarray*}
\| f \|_{Cop(p)}
&=&
\| C^* f \|_p \leq \| Cf + C^*f \|_p \\
&=&
\| C^* Cf + \int_0^1 |f(s)| \, ds \|_p \leq \| C^* C f \|_p + \| f \|_1\\
&\leq&
p \,  \| C f \|_p  + \| f\|_1 \leq (p + 1) \, \max( \| f\|_{Ces(p)}, \| f\|_1).
\end{eqnarray*}

Finally, let us show that $Ces_p \not \hookrightarrow Cop_p$ by comparing norms of the functions
$f_h(t) = \frac{1}{\sqrt{1-t}} \chi_{[h, 1)} (t), 0 < h < 1$ in these spaces. We have
\begin{equation*}
C^*(f_h)(t) = \left\{
\begin{array}{ll}
\int_h^1 \frac{1}{s \sqrt{1-s}}\, ds, & ~~~\mbox{{\rm if} ~$0 < t \leq h$,} \\
\int_t^1 \frac{1}{s \sqrt{1-s}} \, ds, &
~~~\mbox{{\rm if} ~$h \leq t \leq 1$,}\\
\end{array}
\right.
\end{equation*}
and
\begin{eqnarray*}
\| f_h\|_{Cop(p)}^p
&=&
\| C^* (f_h)\|_p^p \geq \int_0^h \Big(\int_h^1 \frac{1}{s\sqrt{1-s}}\, ds\Big)^p\, dt\\
&=&
h \, \Big(\int_h^1 \frac{1}{s\sqrt{1-s}}\, ds\Big)^p \geq  h\, \Big(\int_h^1 \frac{1}{\sqrt{1-s}}\, ds\Big)^p\\
&=&
2^p h (1-h)^{p/2}.
\end{eqnarray*}
Also,
\begin{equation*}
C(f_h)(t) = \left\{
\begin{array}{ll}
0, & ~~~\mbox{{\rm if} ~$0 < t \leq h$,} \\
\frac{2}{t}(\sqrt{1-h} - \sqrt{1-t}), &
~~~\mbox{{\rm if} ~$h \leq t \leq 1$,}\\
\end{array}
\right.  \label{def_phicomp}
\end{equation*}

\vspace{-2mm}

\noindent
and
\begin{eqnarray*}
\| f_h\|_{Ces(p)}^p
&=&
\| C (f_h)\|_p^p = 2^p \int_h^1 \Big(\frac{\sqrt{1-h} -\sqrt{1-t}}{t}\Big)^p\, dt\\
&\leq&
2^p \int_h^1 \frac{(1-h)^{p/2}}{t^p}\, dt = 2^p (1-h)^{p/2} \, \frac{1- h^{p-1}}{(p-1) h^{p-1}}.
\end{eqnarray*}

\vspace{-2mm}

\noindent
Thus
\vspace{-2mm}
$$
\frac{\| f_h\|_{Cop(p)}^p}{\| f_h\|_{Ces(p)}^p} \geq \frac{2^p h (1-h)^{p/2} (p-1) h^{p-1}}{2^p (1-h)^{p/2} (1- h^{p-1})}
= (p-1) \, \frac{h^p}{1-h^{p-1}} \rightarrow \infty ~~{\rm as} ~ h \rightarrow 1^+,
$$
and the proof is complete. \end{proof}

\begin{remark} \label{Remark1} {\rm Alternatively, the space $ces_p$ for $1 < p < \infty$ can be obtained as
an interpolation space with respect to the couple $(l_1, l_1(2^{-n}))$ by the so-called $K^+$-method being
a version of the standard K-method, precisely,
$ces_p = (l_1, l_1(2^{-n}))_{l_p(1/n)}^{K^+}$ (cf. \cite[the proof of Theorem 6.4]{CFM}). }
\end{remark}

\begin{remark} \label{Remark2} {\rm The results in Theorem \ref{Thm1} give a description of the real
interpolation spaces $(L_1, L_1(1/t))_{1-1/p, p}$ in the off-diagonal case. Before it was only known that
they are intersections of weighted $L_1(w)$-spaces with the weights $w$ from certain sets
(cf. \cite[Theorem 4.1]{Gi}, \cite[Theorem 2]{MP}) or some block spaces (cf. \cite[Lemma 3.1]{AKMNP}).}
\end{remark}

The following corollary follows directly from Theorem \ref{Thm1}, reiteration formula (\ref{equation2})
and the equalities $Cop_\infty[0,1]=L_1(1/t)$ and $Cop_1[0,1]=L_1.$

\begin{corollary} \label{Corollary1} If $1 < p_0 < p_1 < \infty$ and $\frac{1}{p} = \frac{1-\theta}{p_0} + \frac{\theta}{p_1}$
with $ 0 < \theta < 1$, then
\begin{equation} \label{equation4}
(ces_{p_0}, ces_{p_1})_{\theta, p} = ces_p, (Ces_{p_0}[0, \infty), Ces_{p_1}[0, \infty))_{\theta, p} = Ces_p[0, \infty).
\end{equation}
If $1 \leq p_0 < p_1 \leq \infty$ and $\frac{1}{p} = \frac{1-\theta}{p_0} + \frac{\theta}{p_1}$ with $ 0 < \theta < 1$,
then
\begin{equation} \label{equation5}
(Cop_{p_0}[0, 1], Cop_{p_1}[0, 1])_{\theta, p} = Cop_p[0, 1].
\end{equation}
\end{corollary}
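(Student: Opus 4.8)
The plan is to read off from Theorem~\ref{Thm1} that each Ces\`aro or Copson space appearing as an endpoint of an outer couple is itself a $K$-interpolation space of one fixed pair of weighted $L_1$-spaces, and then to collapse the iterated interpolation by the reiteration formula (\ref{equation2}). First I would treat (\ref{equation4}). By Theorem~\ref{Thm1}(i), $ces_{p_i}=(l_1,l_1(1/k))_{1-1/p_i,\,p_i}$, and by Theorem~\ref{Thm1}(ii), $Ces_{p_i}[0,\infty)=(L_1,L_1(1/t))_{1-1/p_i,\,p_i}$; put $\theta_i=1-1/p_i$ for $i=0,1$. Because $1<p_0<p_1<\infty$ forces $0<\theta_0\neq\theta_1<1$, the formula (\ref{equation2}) applies verbatim and gives $(ces_{p_0},ces_{p_1})_{\theta,p}=(l_1,l_1(1/k))_{\eta,p}$ (and the analogous function-space identity) with $\eta=(1-\theta)\theta_0+\theta\theta_1$. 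The arithmetic $\eta=(1-\theta)(1-1/p_0)+\theta(1-1/p_1)=1-[(1-\theta)/p_0+\theta/p_1]=1-1/p$, together with a second appeal to Theorem~\ref{Thm1}(i) (resp. (ii)), identifies the right-hand side with $ces_p$ (resp. $Ces_p[0,\infty)$), which is exactly (\ref{equation4}).

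For (\ref{equation5}) I would run the same scheme on the single couple $(L_1,L_1(1/t))$ over $[0,1]$, now using Theorem~\ref{Thm1}(iii), which gives $Cop_p=(L_1,L_1(1/t))_{1-1/p,\,p}$ for every $1<p\le\infty$, together with the two boundary identities $Cop_1[0,1]=L_1$ and $Cop_\infty[0,1]=L_1(1/t)$. With $\theta_i=1-1/p_i$, these boundary identities are precisely the degenerate values $\theta_0=0$ and $\theta_1=1$ of the relation $Cop_{p_i}=(L_1,L_1(1/t))_{1-1/p_i,\,p_i}$. Thus, over the whole admissible range $1\le p_0<p_1\le\infty$, the outer couple $(Cop_{p_0},Cop_{p_1})$ is always of reiteration type with $0\le\theta_0<\theta_1\le1$; the exponent is again $\eta=1-1/p$, and Theorem~\ref{Thm1}(iii) returns $(L_1,L_1(1/t))_{1-1/p,\,p}=Cop_p[0,1]$.

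The one genuine obstacle is that the reiteration formula (\ref{equation2}) as quoted requires $0<\theta_0,\theta_1<1$, whereas for $p_0=1$ or $p_1=\infty$ one parameter sits on the boundary. I would bridge this by invoking the endpoint (stability) version of the reiteration theorem, valid for $0\le\theta_0<\theta_1\le1$ under the conventions that the space with parameter $0$ is $L_1$ and the space with parameter $1$ is $L_1(1/t)$ (see \cite[Theorem 3.5.3]{BL}, \cite[Theorem 2.4, p.~311]{BS}, \cite[Theorem 3.8.10]{BK}); the identities $Cop_1=L_1$ and $Cop_\infty=L_1(1/t)$ are exactly what legitimizes substituting the endpoint spaces into that formula. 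Everything else is the routine computation of $\eta$ and a single application of Theorem~\ref{Thm1}, so no further estimates are needed.
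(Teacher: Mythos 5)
Your proof is correct and is essentially the paper's own argument: the paper deduces the corollary in one line from Theorem~\ref{Thm1}, the reiteration formula (\ref{equation2}), and the identities $Cop_1[0,1]=L_1$ and $Cop_\infty[0,1]=L_1(1/t)$, exactly the ingredients you use. Your explicit appeal to the endpoint (class $\mathcal{C}(0)$/$\mathcal{C}(1)$) version of the reiteration theorem to handle $p_0=1$ and $p_1=\infty$ merely makes precise what the paper leaves implicit when it invokes those two boundary identities.
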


\vspace{-3mm}

\begin{remark} \label{Remark3} {\rm  Another proof of the second equality for the spaces on $[0, \infty)$
from the last corollary was given by Sinnamon \cite[Corollary 2]{Si91}.}
\end{remark}

\vspace{2 mm}
\begin{center}
{\bf 3. Ces{\`a}ro spaces on $[0, \infty)$ as interpolation spaces
with respect to the couple $(L_1, Ces_{\infty})$}
\end{center}

All the spaces considered in this part are on the interval $I = [0, \infty)$.
By \cite[p. 194]{MS} the down space $D^{\infty}: = (L_{\infty})^{\downarrow} = Ces_{\infty}$ isometrically.
On the other hand, for a Banach lattice $F$ with the Fatou property we have
$F \in Int(L_1, D^{\infty}) = Int(L_1, Ces_{\infty})$
if and only if $F = E^{\downarrow}$ with equality of norms for some $E \in Int(L_1, L_{\infty})$
(see \cite[Theorem 6.4]{MS}). Then, in particular, $L_p^{\downarrow} \in Int(L_1, Ces_{\infty})$.
Since the operator $C$ is bounded in $L_p$ for $ 1 < p \leq \infty,$ by \cite[Theorem 3.1]{Si01}, it follows that
$$
\| f \|_{L_p^{\downarrow} } = \| \,|f|\, \|_{L_p^{\downarrow} } \approx \| C f \|_{L_p} = \| f \|_{Ces(p)}.
$$
Thus, for any $ 1 < p < \infty$ we have $Ces_p \in Int(L_1, Ces_{\infty})$ and $Ces_p = L_p^{\downarrow}$.
\vspace{2mm}
Moreover, the following more precise and general assertion, which is
an almost immediate consequence of Theorem 6.4 from \cite{MS}, holds.

\begin{proposition} \label{Proposition1} Let $E, F \in Int(L_1, L_{\infty})$ and $\Phi$ be an interpolation
Banach lattice with respect to the couple $(L_{\infty}, L_{\infty}(1/u))$ on $(0, \infty)$. Then we have
\begin{equation} \label{equation6}
(E^{\downarrow}, F^{\downarrow})_{\Phi}^K = [(E, F)_{\Phi}^K]^{\downarrow}.
\end{equation}
In particular, if $1 < p < \infty$, then
\begin{equation} \label{equation7}
(L_1, Ces_{\infty})_{1-1/p, p} = Ces_p.
\end{equation}
\end{proposition}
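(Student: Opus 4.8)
The plan is to reduce the general identity~\eqref{equation6} to a single relation between the $K$-functionals of the two couples $(E,F)$ and $(E^{\downarrow},F^{\downarrow})$, mediated by Sinnamon's \emph{level function}. For $0\le f\in L^0[0,\infty)$ let $f^{o}$ denote the level function of $f$, i.e.\ the derivative of the least concave majorant of $x\mapsto\int_0^x f$; it is non-increasing, satisfies $\int_0^x f\le\int_0^x f^{o}$ for all $x>0$, and obeys the fundamental norm identity $\|f\|_{G^{\downarrow}}=\|f^{o}\|_{G}$ for every rearrangement-invariant space $G$ with the Fatou property (this underlies \cite[Theorem 3.1]{Si01} and \cite[Theorem 6.4]{MS}; note that, unlike the estimate $\|f\|_{G^{\downarrow}}\approx\|Cf\|_G$, it requires no boundedness of $C$ on $G$, as the case $G=L_1$ already shows). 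The key step I would isolate as a lemma is the \emph{commutation formula}
\begin{equation*}
K(t,f;E^{\downarrow},F^{\downarrow})=K(t,f^{o};E,F)\qquad(t>0,\ 0\le f\in E^{\downarrow}+F^{\downarrow}),
\end{equation*}
valid for all $E,F\in Int(L_1,L_\infty)$.

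Granting this formula, \eqref{equation6} becomes bookkeeping. Since $\Phi\in Int(L_\infty,L_\infty(1/u))$, the general $K$-method is an exact interpolation functor, so by transitivity of interpolation $G:=(E,F)_{\Phi}^{K}$ lies in $Int(L_1,L_\infty)$ and has the Fatou property; hence the level-function norm identity applies to it. Now for $0\le f$ the commutation formula gives $K(\cdot,f;E^{\downarrow},F^{\downarrow})=K(\cdot,f^{o};E,F)$, so $f\in(E^{\downarrow},F^{\downarrow})_{\Phi}^{K}$ precisely when $K(\cdot,f^{o};E,F)\in\Phi$, i.e.\ when $f^{o}\in G$, and the norms agree:
\begin{equation*}
\|f\|_{(E^{\downarrow},F^{\downarrow})_{\Phi}^{K}}=\|K(\cdot,f^{o};E,F)\|_{\Phi}=\|f^{o}\|_{G}=\|f\|_{G^{\downarrow}}=\|f\|_{[(E,F)_{\Phi}^{K}]^{\downarrow}}.
\end{equation*}
Passing from $|f|$ to $f$ in the usual way, this proves~\eqref{equation6}.

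For the particular case~\eqref{equation7} I would apply \eqref{equation6} with $E=L_1$, $F=L_\infty$ and $\Phi$ the parameter $L_p(t^{-(1-1/p)};dt/t)$ of the classical method $(\cdot)_{1-1/p,p}$. This $\Phi$ is a weighted $L_p$-space and is an interpolation lattice with respect to $(L_\infty,L_\infty(1/u))$, since power weights satisfy the requisite Calder\'on condition (indeed $\Phi=(L_\infty,L_\infty(1/u))_{\eta,p}$ for a suitable $\eta$). Because $L_1^{\downarrow}=L_1$ and $(L_\infty)^{\downarrow}=Ces_\infty$, the left-hand side of~\eqref{equation6} equals $(L_1,Ces_\infty)_{1-1/p,p}$; on the right-hand side, $(L_1,L_\infty)_{1-1/p,p}=L_p$ by the classical formula~\eqref{K-functional}, while $L_p^{\downarrow}=Ces_p$ by Sinnamon's theorem together with the boundedness of $C$ on $L_p$ for $p>1$, exactly as in the paragraph preceding the proposition. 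Hence $(L_1,Ces_\infty)_{1-1/p,p}=Ces_p$.

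The whole difficulty is concentrated in the commutation formula, and there in the inequality $K(t,f;E^{\downarrow},F^{\downarrow})\le K(t,f^{o};E,F)$. The opposite inequality is comparatively routine: from a near-optimal splitting $f=f_0+f_1$ one uses $\|f_i\|_{\cdot^{\downarrow}}=\|f_i^{o}\|_{\cdot}$ together with the Hardy-type domination $\int_0^x f^{o}\le\int_0^x(|f_0|^{o}+|f_1|^{o})$ of non-increasing functions to split $f^{o}$ as $h_0+h_1$ with $h_i$ submajorized by $|f_i|^{o}$, and then invokes the rearrangement-invariance of $E$ and $F$ to obtain $\|h_0\|_E+t\|h_1\|_F\le\|f_0\|_{E^{\downarrow}}+t\|f_1\|_{F^{\downarrow}}$. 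The reverse direction is the obstacle, and it is genuinely delicate because $f\mapsto f^{o}$ is nonlinear: an optimal decomposition of the \emph{decreasing} function $f^{o}$ for the couple $(E,F)$ need not be realizable as the pair of level functions of any decomposition of $f$ itself. This is precisely the point at which the finer structure of the level function and the norm-preserving correspondence $E\leftrightarrow E^{\downarrow}$ of \cite[Theorem 6.4]{MS} must be brought to bear, which is why the authors present the proposition as an almost immediate consequence of that theorem.
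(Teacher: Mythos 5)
Your argument has a genuine gap, and you name it yourself: everything is reduced to the ``commutation formula'' $K(t,f;E^{\downarrow},F^{\downarrow})=K(t,f^{o};E,F)$ for arbitrary $E,F\in Int(L_1,L_\infty)$, but this lemma is never proved. You sketch only the easy inequality (via submajorization and the Calder\'on property of $(L_1,L_\infty)$) and then explicitly concede that the reverse inequality is ``the obstacle'' and is ``genuinely delicate'' because $f\mapsto f^{o}$ is nonlinear, deferring it to ``the finer structure of the level function'' and to \cite[Theorem 6.4]{MS} without indicating how that theorem yields it. A proof whose sole nontrivial ingredient is an unproven assertion, with an admission that its hard half is out of reach, is not a proof; moreover, as stated (with equality of $K$-functionals for \emph{all} interpolation spaces $E,F$, not just exact $K$-spaces) the formula is stronger than anything in the cited literature, and at best one can hope for equivalence with uniform constants.

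It is worth seeing how the paper circumvents exactly this difficulty: no pointwise identity for $K$-functionals is needed. Since $(L_1,L_\infty)$ is a Calder\'on--Mityagin couple, the Brudny\u{\i}--Krugljak theorem represents $E=(L_1,L_\infty)_{\Phi_0}^K$ and $F=(L_1,L_\infty)_{\Phi_1}^K$; the reiteration theorem for the general $K$-method then gives $(E,F)_\Phi^K=(L_1,L_\infty)_\Psi^K$ with $\Psi=(\Phi_0,\Phi_1)_\Phi^K$, and the only input from \cite{MS} is the \emph{space-level} commutation $[(L_1,L_\infty)_{\Phi_i}^K]^{\downarrow}=(L_1,D^{\infty})_{\Phi_i}^K$ extracted from the proof of Theorem 6.4 there (together with $L_1^{\downarrow}=L_1$). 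Applying reiteration once more to the couple $(L_1,D^\infty)$ closes the loop. In other words, the difficulty you localize in the $K$-functional commutation is dissolved by working with functors and reiteration rather than with $K$-functionals of individual elements; indeed, your lemma (up to equivalence) is a \emph{consequence} of that chain of results, so repairing your proof along these lines would simply reproduce the paper's argument. Your treatment of the special case \eqref{equation7} --- taking $E=L_1$, $F=L_\infty$, using $L_1^{\downarrow}=L_1$, $(L_\infty)^{\downarrow}=Ces_\infty$, $(L_1,L_\infty)_{1-1/p,p}=L_p$ and $L_p^{\downarrow}=Ces_p$ --- agrees with the paper and is fine, but it still rests on the unproven general identity (or, in this special case, on the $K$-functional formula for the down couple $(L_1,D^\infty)$, which you would need to cite precisely rather than assert).
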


\begin{proof} Firstly, since the Banach couple $(L_1,L_\infty)$ is $K$-monotone \cite[Theorem~2.4.3]{KPS},
by the assumption and the Brudny{\u \i}-Krugljak theorem (cf. \cite[Theorem~4.4.5]{BK}), $E = (L_1, L_{\infty})_{\Phi_0}^K$
and $ F = (L_1, L_{\infty})_{\Phi_1}^K$ with some interpolation Banach lattices $\Phi_0$ and $\Phi_1$
with respect to the couple $(L_{\infty}, L_{\infty}(1/u))$ on $(0, \infty)$. Applying the reiteration
theorem for the general $K$-method (see \cite[Theorem 3.3.11]{BK}), we obtain
$$
(E, F)_{\Phi}^K = ( (L_1, L_{\infty})_{\Phi_0}^K,  (L_1, L_{\infty})_{\Phi_1}^K)_{\Phi}^K =  (L_1, L_{\infty})_{\Psi}^K,
$$
where $\Psi = (\Phi_0, \Phi_1)_{\Phi}^K$. Moreover, from the proof of Theorem 6.4 in \cite{MS} and the equality
$L_1^{\downarrow}=L_1$ (see Section 1) it follows
$$
E^{\downarrow} = [(L_1, L_{\infty})_{\Phi_0}^K]^{\downarrow} = (L_1, D^{\infty})_{\Phi_0}^K, \, F^{\downarrow} = [(L_1, L_{\infty})_{\Phi_1}^K]^{\downarrow} = (L_1, D^{\infty})_{\Phi_1}^K
$$
and
$$
[(E, F)_{\Phi}^K]^{\downarrow} = [(L_1, L_{\infty})_{\Psi}^K]^{\downarrow} = (L_1, D^{\infty})_{\Psi}^K.
$$
Therefore, using the reiteration theorem once again, we obtain
\begin{eqnarray*}
(E^{\downarrow}, F^{\downarrow})_{\Phi}^K = ((L_1, D^{\infty})_{\Phi_0}^K,  (L_1, D^{\infty})_{\Phi_1}^K)_{\Phi}^K
= (L_1, D^{\infty})_{\Psi}^K =  [(E, F)_{\Phi}^K]^{\downarrow}.
\end{eqnarray*}
and equality (\ref{equation6}) is proved. In particlular, from (\ref{equation6}) and the well-known identification formula
$(L_1, L_{\infty})_{1-1/p, p}=L_p$ \cite[Theorem~5.2.1]{BL} it follows that
$$
(L_1, Ces_{\infty})_{1-1/p, p} = (L_1^{\downarrow}, L_{\infty}^{\downarrow})_{1-1/p, p} = L_p^{\downarrow} = Ces_p.
$$
and also equality (\ref{equation7}) is proved.
\end{proof}

For a given symmetric space $E$ on $I = [0, \infty)$ the Ces{\`a}ro function space $Ces_E$ is defined by the norm
$\| f\|_{Ces(E)} = \| Cf \, \|_E$. If operator $C$ is bounded in $E$, then, by \cite[Theorem 3.1]{Si01},
$Ces_E = E^{\downarrow}$. Therefore, applying Proposition 1, we obtain

\begin{corollary} \label{Corollary2} Let the operator $C$ be bounded in symmetric spaces $E$ and $F$ on $[0,\infty)$
and let $\Phi$ be an interpolation Banach lattice with respect to the couple $(L_{\infty}, L_{\infty}(1/u))$ on
$(0, \infty)$. Then
\begin{equation*}
(Ces_E, Ces_F)_{\Phi}^K = Ces_{(E, F)_{\Phi}^K}.
\end{equation*}
In particular, for arbitrary $1 < p_0 < p_1 \leq \infty$
\begin{equation} \label{equation8}
(Ces_{p_0}, Ces_{p_1})_{\theta, p} = Ces_p, ~{\it where} ~~0 < \theta < 1~~{\it and}~~\frac{1}{p} = \frac{1-\theta}{p_0} + \frac{\theta}{p_1}.
\end{equation}
\end{corollary}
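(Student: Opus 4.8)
The plan is to deduce the corollary directly from Proposition \ref{Proposition1}, using Sinnamon's identification of $Ces_E$ with the down space $E^{\downarrow}$ at both ends. First I would record that, since $E$ and $F$ are symmetric spaces on $[0,\infty)$, the Calder\'on--Mityagin theorem for the $K$-monotone couple $(L_1, L_{\infty})$ guarantees $E, F \in Int(L_1, L_{\infty})$, so the hypotheses of Proposition \ref{Proposition1} are met. Because $C$ is bounded on each of $E$ and $F$, Sinnamon's theorem \cite[Theorem 3.1]{Si01} yields $Ces_E = E^{\downarrow}$ and $Ces_F = F^{\downarrow}$ with equivalent norms; consequently $K(t, \cdot; Ces_E, Ces_F) \approx K(t, \cdot; E^{\downarrow}, F^{\downarrow})$ uniformly in $t$, whence $(Ces_E, Ces_F)_{\Phi}^K = (E^{\downarrow}, F^{\downarrow})_{\Phi}^K$.

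Next I would apply Proposition \ref{Proposition1} to obtain $(E^{\downarrow}, F^{\downarrow})_{\Phi}^K = [(E, F)_{\Phi}^K]^{\downarrow}$, and then identify the right-hand side as a Ces\`aro space by invoking Sinnamon's theorem once more, now for the intermediate space $G = (E, F)_{\Phi}^K$. Two facts make this applicable. Since $E$ and $F$ are symmetric, the functional $K(t, f; E, F)$ depends only on $f^*$, so the norm $\| f\|_G = \| K(\cdot, f; E, F)\|_{\Phi}$ is rearrangement-invariant and $G$ is again a symmetric space. Moreover $G$ is an interpolation space for the couple $(E, F)$, and since $C$ is bounded on both endpoints it is bounded on $G$; hence Sinnamon's theorem gives $G^{\downarrow} = Ces_G$, i.e. $[(E, F)_{\Phi}^K]^{\downarrow} = Ces_{(E, F)_{\Phi}^K}$. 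Chaining the three equalities produces the asserted identity $(Ces_E, Ces_F)_{\Phi}^K = Ces_{(E, F)_{\Phi}^K}$.

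For the displayed special case I would take $E = L_{p_0}$ and $F = L_{p_1}$ with $1 < p_0 < p_1 \leq \infty$, both symmetric, and note that $C$ is bounded on $L_p$ for every $1 < p \leq \infty$ by Hardy's inequality, so that $Ces_E = Ces_{p_0}$ and $Ces_F = Ces_{p_1}$. Choosing $\Phi$ to be the parameter of the standard $(\theta, p)$-method, which is an interpolation Banach lattice for $(L_{\infty}, L_{\infty}(1/u))$, and using the classical identification $(L_{p_0}, L_{p_1})_{\theta, p} = L_p$ for $\frac{1}{p} = \frac{1-\theta}{p_0} + \frac{\theta}{p_1}$ (valid also when $p_1 = \infty$), the general equality collapses to $(Ces_{p_0}, Ces_{p_1})_{\theta, p} = Ces_{L_p} = Ces_p$.

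The step I expect to require the most care is the reapplication of Sinnamon's theorem to $G = (E, F)_{\Phi}^K$: one must check that $G$ is genuinely a symmetric space, so that $Ces_G = G^{\downarrow}$ is meaningful and the cited theorem applies, and that the boundedness of $C$ passes from the endpoints to $G$. Both points are soft---rearrangement-invariance of the $K$-functional and the interpolation property of $G$---but they are the only places where the argument is not purely formal, so I would state them explicitly rather than treat the corollary as a one-line consequence of Proposition \ref{Proposition1}.
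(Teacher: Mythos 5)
Your proof is correct and follows essentially the same route as the paper: identify $Ces_E = E^{\downarrow}$ and $Ces_F = F^{\downarrow}$ via Sinnamon's theorem, apply Proposition \ref{Proposition1} to get $(E^{\downarrow}, F^{\downarrow})_{\Phi}^K = [(E, F)_{\Phi}^K]^{\downarrow}$, and then identify the right-hand side as $Ces_{(E, F)_{\Phi}^K}$ by Sinnamon's theorem once more. The points you flag as needing care --- that $(E, F)_{\Phi}^K$ is again a symmetric space and that boundedness of $C$ passes to it by interpolation --- are precisely the details the paper leaves implicit in its one-line deduction, so your write-up is a slightly more explicit version of the same argument.
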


\begin{remark} \label{Remark 4} {\rm If $1 < p < \infty$, then the restriction of the space $Ces_p[0, \infty)$ to the interval
$[0, 1]$ coincides with the intersection $Ces_p[0, 1] \cap L_1[0, 1]$ (cf. \cite{AM}, Remark 5). Therefore, if we ``restrict" formula
(\ref{equation8}) to $[0, 1]$ we obtain only
\begin{equation*}
(Ces_{p_0}[0,1] \cap L_1[0, 1], Ces_{p_1}[0,1] \cap L_1[0, 1])_{\theta, p} = Ces_p[0,1] \cap L_1[0, 1],
\end{equation*}
where $1<p_0<p_1<\infty$ and $\frac{1}{p} = \frac{1-\theta}{p_0} + \frac{\theta}{p_1}$.}
\end{remark}

\begin{center}
{\bf 4. Ces{\`a}ro spaces on $[0, 1]$ as interpolation spaces
with respect to the couple $(L_1(1-t), Ces_{\infty})$}
\end{center}

In contrast to the case of the semi-axis $[0, \infty)$, $Ces_p[0, 1]$ for $1 \leq p < \infty$ is not
even an intermediate space between $L_1[0, 1]$ and $Ces_{\infty}[0, 1]$.
In fact, $Ces_{\infty}[0, 1] \hookrightarrow L_1[0, 1]$,
but it easy to show that $Ces_p[0, 1] \not \subset L_1[0, 1]$ for every $1 \leq p < \infty$.

On the other hand, from the inequality $1 - u \leq \ln 1/u$ $(0< u \leq 1)$ it follows that
$Ces_{p}[0, 1],$ $1 \le p < \infty,$ is an intermediate space between the spaces
$L_1(1-t)[0, 1]$ and $Ces_{\infty}[0, 1]$, because of
\begin{equation*} \label{7}
Ces_{\infty}[0, 1]  \stackrel {1} \hookrightarrow  Ces_p[0, 1] \stackrel {1} \hookrightarrow Ces_1[0, 1] = L_1(\ln1/t)[0, 1]
\stackrel {1} \hookrightarrow L_1 (1-t)[0, 1].
\end{equation*}

\begin{theorem} \label{Thm2} If $1 < p < \infty$, then
\begin{equation} \label{imbedding9}
(Ces_1[0, 1],  Ces_{\infty}[0, 1])_{1-1/p, p}  \stackrel {1} \hookrightarrow Ces_p[0, 1]
\end{equation}
and
\begin{equation} \label{equality10}
(L_1(1-t)[0, 1], Ces_{\infty}[0, 1])_{1-1/p, p} = Ces_p[0, 1].
\end{equation}
\end{theorem}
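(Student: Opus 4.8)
The plan is to prove the two assertions by different means: the imbedding (\ref{imbedding9}) by a soft argument treating $C$ as a sublinear map of couples, and the equality (\ref{equality10}) by two-sided estimates of the $K$-functional of $(L_1(1-t)[0,1],Ces_\infty[0,1])$. For (\ref{imbedding9}), note first that $C$ is positive and sublinear and that, by the definition of the norms, $\|Cf\|_{L_1}=\|f\|_{Ces(1)}$ and $\|Cf\|_{L_\infty}=\|f\|_{Ces(\infty)}$, so $C$ maps $(Ces_1,Ces_\infty)$ into $(L_1,L_\infty)$ as a contraction of each endpoint. Since $0\le Cf\le Cf_0+Cf_1$ for every decomposition $f=f_0+f_1$, splitting $Cf$ in the lattice $L_1+L_\infty$ yields $K(\tau,Cf;L_1,L_\infty)\le K(\tau,f;Ces_1,Ces_\infty)$ for all $\tau>0$, and hence $\|Cf\|_{(L_1,L_\infty)_{1-1/p,p}}\le\|f\|_{(Ces_1,Ces_\infty)_{1-1/p,p}}$. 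For $\theta=1-1/p$, formula (\ref{K-functional}) gives the exact identity
\begin{equation*}
\|g\|_{(L_1,L_\infty)_{1-1/p,p}}=\Big(\int_0^\infty\Big(\frac{1}{\tau}\int_0^\tau g^*(s)\,ds\Big)^p d\tau\Big)^{1/p}=\|Cg^*\|_{L_p(0,\infty)}\ge\|g^*\|_{L_p(0,\infty)}=\|g\|_{L_p},
\end{equation*}
where the inequality uses $Ch\ge h$ pointwise for every non-increasing $h\ge0$. Taking $g=Cf$ we get $\|f\|_{Ces(p)}=\|Cf\|_{L_p[0,1]}\le\|f\|_{(Ces_1,Ces_\infty)_{1-1/p,p}}$, which is (\ref{imbedding9}) with the sharp constant $1$.

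For the equality (\ref{equality10}) I would, as in the proof of Theorem \ref{Thm1}, reduce to $f\ge0$ (a couple of Banach lattices) and estimate $K(\tau,f;L_1(1-t),Ces_\infty)$ from both sides. For the lower bound, fix $x\in(0,1]$ and any decomposition $f=f_0+f_1$. From $Cf(x)\le Cf_0(x)+Cf_1(x)$, the bound $Cf_1(x)\le\|f_1\|_{Ces(\infty)}$ and the estimate $Cf_0(x)=\frac{1}{x}\int_0^x|f_0|\le\frac{1}{x(1-x)}\int_0^x|f_0(t)|(1-t)\,dt\le\frac{1}{x(1-x)}\|f_0\|_{L_1(1-t)}$, multiplying by $\min(\tau,x(1-x))$ gives $\min(\tau,x(1-x))\,Cf(x)\le\|f_0\|_{L_1(1-t)}+\tau\|f_1\|_{Ces(\infty)}$, whence
\begin{equation*}
K(\tau,f;L_1(1-t),Ces_\infty)\ge\sup_{0<x\le1}\min(\tau,x(1-x))\,Cf(x).
\end{equation*}
Evaluating this at $\tau=x(1-x)$, using that $\tau\mapsto K(\tau,f)/\tau$ is non-increasing, and changing variables $\tau=x(1-x)$ separately on $(0,1/2)$ and $(1/2,1)$ (the middle range $x\approx1/2$ being handled by one value of $\tau$) recovers $\int_0^1 Cf(x)^p\,dx$, and therefore $\|f\|_{Ces(p)}\le c\,\|f\|_{(L_1(1-t),Ces_\infty)_{1-1/p,p}}$.

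For the reverse inequality — the main obstacle — I would construct a near-optimal decomposition by capping the Cesàro averages. Writing $g=Cf$ and fixing a level $M=M(\tau)$, set $g_1=\min(g,M)$ and $f_1=(x\,g_1(x))'$; since $x\,g_1(x)=\min(\int_0^x f,\,Mx)$ is non-decreasing, $f_1\ge0$ and $Cf_1=g_1$, so $\|f_1\|_{Ces(\infty)}\le M$. Then $f_0=f-f_1=(x\,(g-M)_+)'$, and using the identity $\int_0^1 h(x)(1-x)\,dx=\int_0^1 Ch(x)\,x\,dx$ (valid for $h\ge0$) together with integration by parts one reduces $\|f_0\|_{L_1(1-t)}$ to a quantity controlled by $(Cf-M)_+$; choosing $M(\tau)$ comparable to $\tau^{-1}\sup_x\min(\tau,x(1-x))Cf(x)$ and invoking the Hardy inequality, exactly as in the proof of Theorem \ref{Thm1}, then bounds $\|f\|_{(L_1(1-t),Ces_\infty)_{1-1/p,p}}$ by $c\,\|Cf\|_{L_p}=c\,\|f\|_{Ces(p)}$. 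The delicate point is estimating the $L_1(1-t)$-cost of $f_0$ near the endpoint $t=1$, where the weight $1-t$ degenerates and $Cf$ may be unbounded; this is precisely where the integrability afforded by $f\in Ces_p\subset Ces_1=L_1(\ln1/t)\subset L_1(1-t)$ must be balanced against the growth of $Cf$, and it is the crux of the argument.

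Combining the two estimates yields (\ref{equality10}). Finally, since $Ces_1\stackrel{1}{\hookrightarrow}L_1(1-t)$ forces $K(\tau,f;L_1(1-t),Ces_\infty)\le K(\tau,f;Ces_1,Ces_\infty)$ and hence $\|f\|_{(L_1(1-t),Ces_\infty)_{1-1/p,p}}\le\|f\|_{(Ces_1,Ces_\infty)_{1-1/p,p}}$, the equality (\ref{equality10}) already gives the imbedding (\ref{imbedding9}) up to a constant, while the sublinear argument of the first step is what supplies the exact constant $1$ asserted there.
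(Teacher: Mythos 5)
Your proof of imbedding (\ref{imbedding9}) is correct and is essentially the paper's own argument: both amount to the inequality $K(\tau,Cf;L_1,L_\infty)\le K(\tau,f;Ces_1,Ces_\infty)$ (the paper phrases it via positive decompositions, you via a lattice splitting of $Cf$), followed by formula (\ref{K-functional}) and the pointwise estimate $Cg^*\ge g^*$ for non-increasing $g^*$. Your lower bound $K(\tau,f;L_1(1-t),Ces_\infty)\ge\sup_{0<x\le 1}\min(\tau,x(1-x))\,Cf(x)$ is also correct, and the change of variables $\tau=x(1-x)$ on $(0,1/4]$ and $[3/4,1)$, with the middle range handled by monotonicity of $K(\tau)/\tau$, does yield the imbedding $(L_1(1-t),Ces_\infty)_{1-1/p,p}\hookrightarrow Ces_p[0,1]$; this half is in fact more elementary and self-contained than the paper's treatment, which gets it from (\ref{imbedding9}) and Proposition \ref{Proposition1} after localizing to $[0,1/2]$ and $[1/2,1]$.

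The genuine gap is the opposite, harder imbedding $Ces_p[0,1]\hookrightarrow(L_1(1-t),Ces_\infty)_{1-1/p,p}$, where one must produce, for each $\tau$, a decomposition $f=f_0+f_1$ with $\|f_0\|_{L_1(1-t)}+\tau\|f_1\|_{Ces(\infty)}$ under control. Your capping construction does not deliver this, and you acknowledge as much ("it is the crux of the argument"). Concretely: with $\int_0^x f_1\,dt=\min\bigl(\int_0^x f\,dt,\,Mx\bigr)$ one gets $f_0=(f-M)\chi_{\{Cf>M\}}$ a.e., a \emph{signed} function; while $Cf_0=(Cf-M)_+$, the quantity to be estimated is $\|f_0\|_{L_1(1-t)}=\int_0^1 C(|f_0|)(x)\,x\,dx$, and $C(|f_0|)$ can be incomparably larger than $(Cf-M)_+$ (take $f=2M\chi_E$ with $E$ of density $1/2$ arranged so that $Cf$ stays just above $M$: then $(Cf-M)_+\approx 0$ but $|f_0|\equiv M$). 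So the promised reduction "to a quantity controlled by $(Cf-M)_+$" fails as stated, and the appeal to "Hardy's inequality exactly as in Theorem \ref{Thm1}" does not transfer: there the couple consisted of two weighted $L_1$-spaces, whose $K$-functional is given exactly by (\ref{K-functional 1}), whereas the endpoint $Ces_\infty$ admits no such formula. The paper circumvents constructing decompositions altogether: it splits $f=f\chi_{[0,1/2]}+f\chi_{[1/2,1]}$; on $[0,1/2]$ the couple coincides with the restriction of $(L_1[0,\infty),Ces_\infty[0,\infty))$, where Proposition \ref{Proposition1} (the Masty{\l}o--Sinnamon down-space theorem plus reiteration) applies; on $[1/2,1]$, after the Baouendi--Goulaouic reduction to the restricted subcouple, the norm of $Ces_\infty|_{[1/2,1]}$ is equivalent within a factor $2$ to that of $L_1|_{[1/2,1]}$, so the $K$-functional is equivalent to $\int_{1/2}^1\min(1-s,t)|h(s)|\,ds$ by (\ref{K-functional 1}), and Hardy's inequality finishes. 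Until you supply the missing estimate for $\|f_0\|_{L_1(1-t)}$ (or localize as the paper does), equality (\ref{equality10}) remains unproved.
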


\begin{proof}
All function spaces in this proof are considered on the segment $I = [0, 1]$ if it is not indicated
something different.

At first, for any $f \in Ces_1$ and all $0 < t \leq 1$ we have
\begin{equation} \label{estimate11}
K(t,f):=K(t, f; Ces_1, Ces_{\infty}) \geq \int_0^t (Cf)^*(s) \, ds.
\end{equation}
In fact, we can assume that $f \geq 0$. If $f = g + h, g \geq 0, h \geq 0, g \in Ces_1, h \in Ces_{\infty}$,
then $Cf = Cg + Ch$ and, therefore, by formula \eqref{K-functional},
\begin{eqnarray*}
\| g \|_{Ces(1)} + t \, \| h \|_{Ces(\infty)}
&=&
\| C g \|_{L_1} + t \, \| Ch \|_{L_{\infty}}\\
&\geq&
\inf \{ \| y \|_{L_1} + t \, \| z \|_{L_{\infty}}: Cf = y + z, y \in L_1, z \in L_{\infty} \} \\
&=&
K(t, Cf; L_1, L_{\infty}) = \int_0^t (Cf)^*(s)\, ds.
\end{eqnarray*}

\vspace{-2mm}

Taking the infimum over all suitable $g$ and $h$ we get (\ref{estimate11}). Next, by the definition of the
real interpolation spaces, we obtain
\begin{eqnarray*}
\| f \|_{1-1/p, p}^p
&\ge&
\int_0^1 \left[t^{1/p-1} K(t, f)\right]^p \frac{dt}{t} = \int_0^1 t^{-p} K(t, f)^p \, dt \\
&\geq&
\int_0^1 t^{-p} \left[\int_0^t (Cf)^*(s)\, ds\right]^p \, dt \geq \| Cf \|_{L_p[0, 1]}^p = \| f \|_{Ces(p)}^p,
\end{eqnarray*}
and the proof of imbedding (\ref{imbedding9}) is complete.
\vspace{2mm}

Before to proceed with the proof of (\ref{equality10}) we introduce the following notation:
for a Banach function space $E$ on $I=[0,\infty)$ or $[0,1]$ and any set $A\subset I$ by $E|_{A}$ we will mean
the subspace of $E$, which consists of all functions $f$ such that ${\rm supp} \, f \subset A$.
Let us denote also $X_p:=(L_1(1-t), Ces_{\infty})_{1-1/p, p}.$
Since
$$
\|f\|_{X_p} \approx \|f \chi_{[0, 1/2]}\|_{X_p} + \|f \chi_{[1/2,1]}\|_{X_p}.
$$
then for proving (\ref{equality10}) it is sufficient to check that
\begin{equation}\label{equation12}
 \|f \chi_{[0, 1/2]}\|_{X_p} \approx \|f \chi_{[0, 1/2]}\|_{Ces_p}
\end{equation}
and
\begin{equation}\label{equation13}
 \|f \chi_{[1/2,1]}\|_{X_p} \approx \|f \chi_{[1/2,1]}\|_{Ces_p}.
\end{equation}
Firstly, since $L_1(1-t)|_{[0,1/2]}=L_1[0,\infty)|_{[0,1/2]}$ and
$Ces_\infty|_{[0,1/2]}=Ces_\infty[0,\infty)|_{[0,1/2]},$ then, by Proposition \ref{Proposition1}
(see formula \eqref{equation7}), we obtain
\begin{equation}\label{equation14}
 \|f \chi_{[0, 1/2]}\|_{X_p} \approx \|f \chi_{[0, 1/2]}\|_{(L_1[0,\infty), Ces_{\infty}[0,\infty))_{1-1/p, p}}
\approx \|f \chi_{[0, 1/2]}\|_{Ces_p[0,\infty)}.
\end{equation}
Note that
\begin{equation} \label{equation15}
Ces_p[0, \infty)|_{[0,1/2]} = Ces_p[0,1]|_{[0,1/2]}
\end{equation}
with equivalence of norms. In fact, by  \cite[Remark 5]{AM}, $Ces_p[0, \infty)|_{{[0, 1]}} = Ces_p \cap L_1.$
If ${\rm supp} \, g \subset [0, 1/2]$, then we have
\begin{eqnarray*}
\| g \|_{L_1} = \int_0^{1/2} |g(s)| \, ds \leq 2^{1/p} \, \Big( \int_{1/2}^1 (\frac{1}{t} \int_0^t |g(s)| \, ds)^p \, dt \Big)^{1/p}
\leq 2^{1/p} \, \| g \|_{Ces(p)}.
\end{eqnarray*}
Combining this together with the previous equality, we obtain (\ref{equation15}).
From (\ref{equation15}) and \eqref{equation14} it follows \eqref{equation12}.

Now, we prove \eqref{equation13}.
Since $(L_1(1-s)|_{[1/2,1]}, Ces_{\infty}|_{[1/2,1]})$ is a complemented subcouple of the Banach couple
$(L_1(1-s), Ces_{\infty}),$ then, by the well-known result of Baouendi and Goulaouic
\cite[Theorem 1]{BG} which is valid for arbitrary interpolation method
(see also \cite[Theorem 1.17.1]{Tr}), we have
$$
\|f \chi_{[1/2,1]}\|_{X_p} \approx \|f \chi_{[1/2,1]}\|_{Y_p},
$$
where $Y_p:=(L_1(1-s)|_{[1/2,1]}, Ces_{\infty}|_{[1/2,1]})_{1-1/p, p}.$
Therefore, \eqref{equation13} will be proved whenever we show that
\begin{equation}\label{equation16}
Y_p= Ces_p|_{[1/2,1]}.
\end{equation}

On the one hand, since $1 - u \leq \ln1/u \leq 2(1-u)$ for all
$1/2 \leq u \leq 1$ and $Ces_1 = L_1(\ln1/s)$, then $Ces_1|_{[1/2,1]}=L_1(1-s)|_{[1/2,1]},$ and, by already proved imbedding \eqref{imbedding9},
we obtain
$$
Y_p=(Ces_1|_{[1/2,1]}, Ces_{\infty}|_{[1/2,1]})_{1-1/p, p}\subset Ces_p|_{[1/2,1]}.
$$

For proving the opposite imbedding we note, firstly, that for any function $h$ with
${\rm supp} \, h \subset [1/2, 1]$  we have
$$
\| h \|_{Ces(\infty)} = \sup_{1/2 \leq x \leq 1} \frac{1}{x} \int_{1/2}^x |h(s)| \, ds,
$$
whence
$$
\| h \|_{L_1} =  \int_{1/2}^1 |h(s)| \, ds \leq  \| h \|_{Ces(\infty)} \leq 2 \,  \int_{1/2}^1 |h(s)| \, ds = 2\| h \|_{L_1}.
$$
Therefore, using formula for
the $K$-functional with respect to a couple of weighted $L_1$-spaces (see \eqref{K-functional 1}), we obtain
\begin{equation} \label{equation17}
G(t, h) \leq K(t, h; L_1(1-s)|_{[1/2,1]}, Ces_{\infty}|_{[1/2,1]}) \leq 2\, G(t, h),
\end{equation}
where
$$
G(t, h) = K(t, h; L_1(1-s)|_{[1/2,1]}, L_1|_{[1/2,1]})=\int_{1/2}^1 \min(1-s, t) |h(s)| \, ds.$$
Furthermore, let $h \in L_1|_{[1/2,1]}$. Then
$$
Ch(s) = \frac{1}{s}  \int_{1/2}^s |h(u)| \, du \geq  \int_{1/2}^s |h(u)| \, du,
$$
whence
$$
(Ch)^*(s) \geq  \int_{1/2}^{1-s} |h(u)| \, du, ~ 0 < s \leq 1.
$$
Therefore, for all $0 \leq t \leq 1$, we obtain
\begin{eqnarray*}
\int_0^t (Ch)^*(s)\, ds
& \geq&
\int_0^t \Big( \int_{1/2}^{1-s} |h(u)| \, du\Big) \, ds\\
&=&
\int_{1/2}^{1-t} \Big( \int_0^t |h(u)| \, ds\Big) \, du + \int_{1-t}^1\Big( \int_0^{1-u} |h(u)| \, ds\Big) \, du \\
&=&
t\, \int_{1/2}^{1-t}  |h(u)| \, du + \int_{1-t}^1 (1-u)|h(u)| \, du =G(t,h).
\end{eqnarray*}
From this inequality and the definition of $G(t,h)$ it follows that the estimate
$$
\int_0^{\min(1, t)} (Ch)^*(s)\, ds \geq  G(t,h)
$$
holds for all $t > 0$.
Hence, by (\ref{equation17}) and Hardy classical inequality for every $h \in Ces_p$ with ${\rm supp} \, h \subset [1/2, 1],$
we have
\vspace{-2mm}
\begin{eqnarray*}
\| h \|_{Y_p}
&=&
\Big ( \int_0^{\infty} t^{-p} K(t, h; L_1(1-s)|_{[1/2,1]}, Ces_{\infty}|_{[1/2,1]})^p \, dt \Big)^{1/p} \\
&\leq&
2 \, \Big ( \int_0^{\infty} t^{-p} G(t, h)^p \, dt \Big)^{1/p}\le
2 \,  \Big ( \int_0^{\infty} t^{-p} \Big( \int_0^{\min(1,t)} (Ch)^*(s) \, ds\Big)^p \, dt \Big)^{1/p}\\
&\le&
2 \,  \Big [ \Big( \int_0^1 t^{-p} \Big( \int_0^t (Ch)^*(s) \, ds\Big)^p \, dt \Big)^{1/p} +   \Big( \int_1^{\infty} t^{-p}
\Big( \int_0^1 (Ch)^*(s) \, ds\Big)^p \, dt \Big)^{1/p} \Big] \\
&\leq&
2 \, \Big[ \frac{p}{p-1}\, \| Ch\|_{L_p[0, 1]} + \frac{1}{(p-1)^{1/p}}\, \| Ch \|_{L_1[0, 1]} \Big]
\leq
\frac{4p}{p-1}\, \| h\|_{Ces_p[0, 1]}.
\end{eqnarray*}
Thus, $Ces_{p}|_{[1/2,1]}\subset Y_p,$ equality (\ref{equation16}) holds, and the proof is complete.
\end{proof}

The following result is an immediate consequence of equality (\ref{equality10}) and the reiteration equality
(\ref{equation2}).

\begin{corollary} \label{Corollary3}
If $1 < p_0 < p_1 \leq \infty$ and $\frac{1}{p} = \frac{1-\theta}{p_0} + \frac{\theta}{p_1}$ with $ 0 < \theta < 1$,
then
\begin{equation*}
(Ces_{p_0}[0, 1], Ces_{p_1}[0, 1])_{\theta, p} = Ces_p[0, 1].
\end{equation*}
\end{corollary}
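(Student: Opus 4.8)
The plan is to fix the couple $\bar{X} := (L_1(1-t)[0,1],\, Ces_{\infty}[0,1])$, realize every finite-exponent Ces{\`a}ro space occurring in the statement as a $K$-interpolation space of $\bar{X}$, and then read off the conclusion from the reiteration formula \eqref{equation2}. Abbreviating $\bar{X}_{\vartheta, r} := (L_1(1-t)[0,1],\, Ces_{\infty}[0,1])_{\vartheta, r}$, equality \eqref{equality10} of Theorem \ref{Thm2} gives
\[
Ces_q[0,1] = \bar{X}_{1-1/q,\, q} \qquad (1 < q < \infty)
\]
with equivalent norms; this is the only input needed from the preceding section.

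Consider first $1 < p_0 < p_1 < \infty$. Put $\theta_0 = 1 - 1/p_0$ and $\theta_1 = 1 - 1/p_1$, so that $0 < \theta_0 < \theta_1 < 1$ and in particular $\theta_0 \ne \theta_1$, meeting the hypotheses of \eqref{equation2}. Substituting the above representations of $Ces_{p_0}$ and $Ces_{p_1}$ and applying \eqref{equation2},
\[
(Ces_{p_0}[0,1], Ces_{p_1}[0,1])_{\theta, p}
= \big( \bar{X}_{\theta_0, p_0},\, \bar{X}_{\theta_1, p_1} \big)_{\theta, p}
= \bar{X}_{\eta,\, p},
\]
where $\eta = (1-\theta)\theta_0 + \theta\theta_1 = 1 - \big[(1-\theta)/p_0 + \theta/p_1\big] = 1 - 1/p$. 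Since $1 < p < \infty$, a final use of \eqref{equality10} turns $\bar{X}_{1-1/p,\, p}$ back into $Ces_p[0,1]$, as claimed.

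The only delicate point is the endpoint case $p_1 = \infty$, where \eqref{equation2} does not apply verbatim: here $Ces_{\infty}[0,1]$ is precisely the right endpoint $X_1$ of $\bar{X}$, and since \eqref{equality10} holds only for finite exponents it cannot be written as $\bar{X}_{\theta_1, p_1}$ with $\theta_1 < 1$. I would instead invoke the endpoint form of the reiteration theorem contained in the references cited for \eqref{equation2} (e.g. \cite[Theorem 3.5.3]{BL}): for $0 \le \theta_0 < 1$ and $0 < \theta < 1$,
\[
\big( \bar{X}_{\theta_0, p_0},\, X_1 \big)_{\theta, p} = \bar{X}_{(1-\theta)\theta_0 + \theta,\, p}.
\]
With $\theta_0 = 1 - 1/p_0$ the index becomes $(1-\theta)(1 - 1/p_0) + \theta = 1 - (1-\theta)/p_0$, and since $1/p = (1-\theta)/p_0 + \theta/p_1$ collapses to $(1-\theta)/p_0$ when $p_1 = \infty$, this equals $1 - 1/p$. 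One more application of \eqref{equality10} identifies $\bar{X}_{1-1/p,\, p}$ with $Ces_p[0,1]$, finishing the proof. Apart from this endpoint bookkeeping the argument is a pure substitution into reiteration together with the arithmetic identity $(1-\theta)\theta_0 + \theta\theta_1 = 1 - 1/p$, so I anticipate no further obstacle.
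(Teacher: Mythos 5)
Your proposal is correct and follows essentially the same route as the paper, which derives the corollary immediately from equality \eqref{equality10} of Theorem \ref{Thm2} together with the reiteration formula \eqref{equation2}. Your explicit treatment of the endpoint case $p_1=\infty$ (via the class-$C(1)$ form of reiteration, since $Ces_\infty[0,1]$ is itself the right endpoint of the couple and \eqref{equation2} as stated requires $0<\theta_0,\theta_1<1$) is a point the paper leaves implicit, but it is the standard completion of the same argument rather than a different approach.
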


\begin{remark} \label{Remark new}
{\rm An inspection of the proof of Theorem \ref{Thm2} shows that
\begin{equation*}
(Ces_1|_{[1/2,1]}, Ces_{\infty}|_{[1/2,1]})_{1-1/p, p}=Ces_p|_{[1/2,1]}.
\end{equation*}
for every $1<p<\infty$ with equivalence of norms.}
\end{remark}

\begin{remark} \label{Remark 5} {\rm Comparison of formulas from Remark 4 and Corollary \ref{Corollary3}
shows that the real method $(\cdot, \cdot)_{\theta, p}$ ``well" interpolates the intersection of Ces{\`a}ro spaces on the
segment $[0,1]$ with the space $L_1[0, 1]$ or, more precisely, we have
\begin{equation*}
(Ces_{p_0}[0,1] \cap L_1[0, 1], Ces_{p_1}[0,1] \cap L_1[0, 1])_{\theta, p} = (Ces_{p_0}[0,1], Ces_{p_1}[0,1])_{\theta, p}  \cap L_1[0, 1],
\end{equation*}
for all $1 < p_0 < p_1 \leq \infty, 0 < \theta < 1$ and $\frac{1}{p} = \frac{1-\theta}{p_0} + \frac{\theta}{p_1}$.}
\end{remark}

\begin{remark} \label{Remark 5new}
{\rm We will see futher that imbedding \eqref{imbedding9} is strict for every $1<p<\infty$ and, even more,
that $Ces_p[0,1]$ is not an interpolation space between the spaces $Ces_1[0,1]$ and $Ces_\infty[0,1].$
Thus, the weighted $L_1$-space $L_1(1-t)[0,1]$ is in a sense the ``proper" end of the scale of Ces{\`a}ro
spaces $Ces_p[0,1],$ $1<p\le\infty.$}
\end{remark}

\newpage
\begin{center}
{\bf 5. The $K$-functional for the couple $(Ces_1[0, 1], Ces_{\infty}[0, 1])$}
\end{center}
In this part we will find an equivalent expression for the $K$-functional
$$
K(t, f) = K(t, f; Ces_1, Ces_{\infty}) = K(t, f; Ces_1[0, 1], Ces_{\infty}[0, 1]).
$$
We start with a lemma giving its lower estimate. Let us introduce two functions defined on $(0, 1]$
by formulas
\begin{equation} \label{equation18}
\tau_1(t) = t/\ln(e/t) ~~ {\rm and} ~~ \tau_2(t) = e^{-t} ~ {\rm for} ~~  0 < t \leq 1.
\end{equation}
It is easy to see that there exists a unique $t_0 \in (0, 1)$ such that $\tau_1(t_0) = \tau_2(t_0)$ and
$\tau_1(t) < \tau_2(t)$ if and only if $0 < t < t_0$.

\vspace{3mm}
\begin{lemma} \label{lemma1}{\rm (lower estimates).} Let $f \in Ces_1[0, 1],$ $f \geq 0$ and $0 < t \leq 1.$
\begin{itemize}
\item[$(i)$] If $f_0 = f \chi_{[0, \tau_1(t)] \cup [\tau_2(t), 1]}$, then
\begin{equation} \label{19}
K(t, f) \geq \frac{1}{4}\, \| f_0 \|_{Ces(1)}.
\end{equation}
\item[$(ii)$] If $f_1 = f \chi_{[\tau_1(t), \tau_2(t)]}$, then
\begin{equation} \label{20}
K(t, f) \geq \frac{1}{e^2}\, t\, \| f_1 \|_{Ces(\infty)}.
\end{equation}
\end{itemize}
\end{lemma}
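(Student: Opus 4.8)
The plan is to derive both lower bounds from two standard reductions and then a case analysis tuned to the weights $\tau_1(t)=t/\ln(e/t)$ and $\tau_2(t)=e^{-t}$. Since $Ces_1$ and $Ces_\infty$ are Banach lattices and $\|u\chi_A\|_{Ces(1)}\le\|u\|_{Ces(1)}$, $\|u\chi_A\|_{Ces(\infty)}\le\|u\|_{Ces(\infty)}$ for every measurable $A$, applying an arbitrary decomposition $f=g+h$ to $f\chi_A=g\chi_A+h\chi_A$ gives $K(t,f\chi_A)\le K(t,f)$. Moreover, for $f\ge0$ the infimum defining $K(t,f)$ may be taken over decompositions with $0\le g,h\le f$ only (truncation does not increase lattice norms). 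Thus in both parts it suffices to bound $K(t,f\chi_A)$ from below over such admissible decompositions, with $A$ the relevant support set.

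For $(i)$ I would write $f_0=f'+f''$, where $f'=f\chi_{[0,\tau_1(t)]}$ and $f''=f\chi_{[\tau_2(t),1]}$; the supports being disjoint gives $\|f_0\|_{Ces(1)}=\|f'\|_{Ces(1)}+\|f''\|_{Ces(1)}$. The idea is that moving mass into $Ces_\infty$ is never cheaper than its $Ces_1$-cost. For $h\ge0$ supported in $[\tau_2(t),1]$ one has $Ch(x)\le\|h\|_1/x$, so by $\ln(1/\tau_2(t))=t$, $\|h\|_{Ces(1)}\le t\|h\|_1\le t\|h\|_{Ces(\infty)}$; hence $\|g\|_{Ces(1)}+t\|h\|_{Ces(\infty)}\ge\|g\|_{Ces(1)}+\|h\|_{Ces(1)}\ge\|f''\|_{Ces(1)}$, i.e. $K(t,f'')\ge\|f''\|_{Ces(1)}$. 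For $h\ge0$ supported in $[0,\tau_1(t)]$ I would use $\int_0^x h\le x\|h\|_{Ces(\infty)}$ to obtain $\|h\|_{Ces(1)}\le\|h\|_{Ces(\infty)}\,\tau_1(t)\ln(e/\tau_1(t))$, together with the identity $\tau_1(t)\ln(e/\tau_1(t))=t\big(1+\tfrac{\ln\ln(e/t)}{\ln(e/t)}\big)\le(1+1/e)\,t$ (as $\sup_{L>0}\tfrac{\ln L}{L}=1/e$), giving $K(t,f')\ge\tfrac{e}{e+1}\|f'\|_{Ces(1)}\ge\tfrac12\|f'\|_{Ces(1)}$. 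Combining through $\max(A,B)\ge\tfrac12(A+B)$ yields $K(t,f)\ge\tfrac14\|f_0\|_{Ces(1)}$.

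For $(ii)$ I would reduce to $f_1=f\chi_{[\tau_1(t),\tau_2(t)]}$ and pick $x_1\in[\tau_1(t),\tau_2(t)]$ where $Cf_1$ attains its maximum $M=\|f_1\|_{Ces(\infty)}$, so $\int_0^{x_1}f_1=x_1M$ (that the maximum is attained in this interval follows since $Cf_1$ vanishes on $(0,\tau_1(t))$ and equals $\|f_1\|_1/x$ on $[\tau_2(t),1]$). For an admissible $f_1=g+h$ the two one-sided estimates $\|h\|_{Ces(\infty)}\ge Ch(x_1)=\tfrac1{x_1}\int_0^{x_1}h$ and $\|g\|_{Ces(1)}\ge\int_{x_1}^1 Cg\,dx\ge\big(\int_0^{x_1}g\big)\ln(1/x_1)$ hold. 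Setting $a=\int_0^{x_1}g$, $b=\int_0^{x_1}h$ with $a+b=x_1M$, the cost $\|g\|_{Ces(1)}+t\|h\|_{Ces(\infty)}$ dominates the linear expression $a\ln(1/x_1)+b\,t/x_1$, whose minimum over $a,b\ge0$, $a+b=x_1M$, is $M\min(x_1\ln(1/x_1),t)$. Hence $K(t,f_1)\ge M\min(\phi(x_1),t)$, where $\phi(x):=x\ln(1/x)$, and it remains to prove $\phi(x_1)\ge t/e^2$.

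The main obstacle is exactly this last estimate. I would use that $\phi$ is unimodal (increasing on $(0,1/e)$, decreasing afterwards), so $\phi(x_1)\ge\min\big(\phi(\tau_1(t)),\phi(\tau_2(t))\big)$. The right endpoint is immediate, $\phi(\tau_2(t))=te^{-t}\ge t/e\ge t/e^2$. The left endpoint gives $\phi(\tau_1(t))=t\big(1-\tfrac{1-\ln\ln(e/t)}{\ln(e/t)}\big)$, and $\phi(\tau_1(t))\ge t/e^2$ requires $\ln(e/t)$ to be bounded away from $1$; this is precisely guaranteed in the only nontrivial regime $f_1\ne0$, namely $\tau_1(t)\le\tau_2(t)$ (equivalently $t\le t_0$), which forces $\ln(e/t)\ge\ln(e/t_0)>1$. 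This delicate but elementary calculus with $\phi$, together with the two sharp support-localized norm comparisons in part $(i)$, is where the specific shapes of $\tau_1$ and $\tau_2$ enter and is the crux of the argument.
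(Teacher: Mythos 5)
Part (i) of your argument is correct, and it runs on the same mechanism as the paper's: your two support-localized comparisons --- $\|h\|_{Ces(1)}\le t\,\|h\|_{Ces(\infty)}$ for $h$ supported in $[\tau_2(t),1]$, and $\|h\|_{Ces(1)}\le(1+\frac1e)\,t\,\|h\|_{Ces(\infty)}$ for $h$ supported in $[0,\tau_1(t)]$ --- play exactly the role of the paper's inequality \eqref{equation23} (where the paper settles for the constant $3t$), except that for the piece on $[\tau_2(t),1]$ the paper instead passes to the couple $(L_1(\ln\frac1s),L_1)$ and invokes the exact weighted-$L_1$ formula \eqref{K-functional 1}. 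Part (ii) is where you genuinely diverge: instead of the paper's pointwise inequality \eqref{26}, checked for every $s\in[\tau_1(t),\tau_2(t)]$ by a two-case analysis, you fix a maximizing point $x_1$ of $Cf_1$ and minimize the linear functional $a\ln(1/x_1)+tb/x_1$ over $a+b=x_1M$, reducing everything to $K(t,f_1)\ge M\min\bigl(\phi(x_1),t\bigr)$ with $\phi(x)=x\ln(1/x)$. That variational reduction is correct and cleaner than the paper's.

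The gap is in the very last step, precisely where you locate the crux. You need $\phi(\tau_1(t))\ge t/e^2$, i.e., writing $L=\ln(e/t)$, the inequality $(1-\ln L)/L\le 1-e^{-2}$, and your justification is that $t\le t_0$ forces $L\ge\ln(e/t_0)>1$. That is not sufficient: $(1-\ln L)/L$ equals $1$ at $L=1$ and decreases continuously, so the required inequality fails on a whole interval $1<L<L^*$ with $L^*\approx 1.08$ (e.g.\ at $L=1.05$ one has $(1-\ln L)/L\approx 0.906>1-e^{-2}\approx 0.865$); had $t_0$ been, say, $0.95$, your claim would be false near $t_0$. Knowing only that $L$ exceeds an unspecified constant greater than $1$ yields $\phi(\tau_1(t))\ge c\,t$ for an unspecified $c>0$, which is weaker than \eqref{20}. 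To close the gap you must bound $t_0$ quantitatively: for instance $\tau_1(0.7)\approx 0.516>e^{-0.7}\approx 0.497$ gives $t_0<0.7$, hence $L\ge 1+\ln\frac{10}{7}>1.35$ for all $t<t_0$, and then $(1-\ln L)/L<0.52<1-e^{-2}$, which finishes your proof with room to spare. Note that the paper confronts the same delicate endpoint issue in its inequality \eqref{28} but resolves it structurally rather than numerically, using the defining equation of $t_1$ to get $\psi(t)\ge\ln^{-1}(e/t_1)=t_1^{-1}e^{-1-t_1}\ge e^{-2}$; some such quantitative input is unavoidable, and your write-up omits it.
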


\begin{proof} (i) Firstly, let us prove that
\begin{equation} \label{21}
K(t, f) \geq \frac{1}{3}\, \|  f \chi_{[0, \tau_1(t)]} \|_{Ces(1)} ~~ {\it for ~ all} ~~ 0 < t \leq 1.
\end{equation}
Let $f\in Ces_1,$ $f = g + h$, where $g \in Ces_1, h \in Ces_{\infty}.$ We may assume that $f\ge 0$ and $0 \leq g \leq f,
0 \leq h \leq f$. Then
\begin{eqnarray}
3 \, ( \| g \|_{Ces(1)} + t \| h \|_{Ces(\infty)} )
&\geq&
\| g \|_{Ces(1)} + 3 t\,  \| h \|_{Ces(\infty)}\nonumber \\
&\geq&
\| (f - h)  \chi_{[0, \tau_1(t)]} \|_{Ces(1)} + 3 t\, \| h  \chi_{[0, \tau_1(t)]} \|_{Ces(\infty)}\nonumber \\
&=&
\| f  \chi_{[0, \tau_1(t)]} \|_{Ces(1)} - \| h  \chi_{[0, \tau_1(t)]} \|_{Ces(1)}\nonumber \\
&+&
3 t\,  \| h  \chi_{[0, \tau_1(t)]} \|_{Ces(\infty)}.
\label{equation22}
\end{eqnarray}
\vspace{-3mm}
Let us show that for any function $v \in Ces_{\infty},$ $v\ge 0,$ with ${\rm supp} \, v \subset [0, \tau_1(t)]$ we have
\vspace{2mm}
\begin{equation} \label{equation23}
\| v \|_{Ces(1)} \leq 3 t \, \| v \|_{Ces(\infty)}.
\end{equation}
In fact, by the assumption on the support of $v$ and by the Fubini theorem, we obtain
\begin{eqnarray*}
\| v \|_{Ces(1)}
&=&
\int_0^{\tau_1(t)} \Big( \frac{1}{s} \int_0^s v(u)\, du\Big)\, ds + \int_{\tau_1(t)}^1\Big(\frac{1}{s} \int_0^{\tau_1(t)} v(u)\, du\Big)\, ds\\
&=&
\int_0^{\tau_1(t)} \Big(\frac{1}{s} \int_0^s v(u)\, du\Big)\, ds + \int_0^{\tau_1(t)} \Big(\int_{\tau_1(t)}^1
\frac{1}{s}\, ds\Big) v(u)\, du \\
&=&
\int_0^{\tau_1(t)} \Big(\frac{1}{s} \int_0^s v(u)\, du\Big)\, ds + \int_0^{\tau_1(t)} v(u)\, du \, \ln \frac{1}{\tau_1(t)}.
\end{eqnarray*}
Since $\tau_1(t) \leq t$ it follows that
\begin{equation*}
\int_0^{\tau_1(t)} \Big(\frac{1}{s} \int_0^s v(u)\, du\Big)\, ds \leq \tau_1(t) \, \sup_{0 < s \leq \tau_1(t)} \frac{1}{s} \int_0^s v(u)\, du \leq t \, \| v \|_{Ces(\infty)}.
\end{equation*}
Moreover,
\begin{eqnarray*}
\int_0^{\tau_1(t)} v(u)\, du \, \ln \frac{1}{\tau_1(t)}
&\leq&
\tau_1(t) \ln \frac{1}{\tau_1(t)} \, \sup_{0 < s \leq \tau_1(t)} \frac{1}{s} \int_0^s v(u) \, du \\
&=&
\frac{\ln \frac{1}{t} + \ln \ln \frac{e}{t}}{\ln\frac{e}{t}} \, t \,  \| v \|_{Ces(\infty)}
\leq 2 t \, \| v \|_{Ces(\infty)},
\end{eqnarray*}
and estimate (\ref{equation23}) follows. Combining this estimate for $v=h\chi_{[0,\tau_1(t)]}$
together with \eqref{equation22} we conclude that
\begin{equation*}
3 \, ( \| g \|_{Ces(1)} + t \| h \|_{Ces(\infty)} ) \geq \| f  \chi_{[0, \tau_1(t)]} \|_{Ces(1)}.
\end{equation*}
Taking the infimum over all decompositions $f = g + h, g \in Ces_1, h \in Ces_{\infty}$ with $0 \leq g \leq f,
0 \leq h \leq f$ we obtain estimate (\ref{21}).

Next, since $Ces_1=L_1(\ln\frac{1}{s})$ and $Ces_{\infty}  \stackrel {1} \hookrightarrow L_1,$ we have
\begin{equation*}
K(t, f; L_1(\ln\frac{1}{s}), L_1) = K(t, f; Ces_1, L_1) \leq K(t, f).
\end{equation*}
Therefore, applying the well-known equality
\begin{equation*}
K(t, f; L_1(\ln\frac{1}{s}), L_1) = \int_0^1 \min(\ln\frac{1}{s}, t) |f(s)| \, ds
\end{equation*}
and the elementary inequality
\begin{equation*}
\int_0^1 \min(\ln\frac{1}{s}, t) |f(s)| \, ds \geq \int_{e^{-t}}^1 \ln\frac{1}{s} |f(s)| \, ds = \| f \chi_{[\tau_2(t), 1]} \|_{Ces(1)},
\end{equation*}
we obtain
\begin{equation*}
K(t, f) \geq  \| f \chi_{[\tau_2(t), 1]} \|_{Ces(1)}.
\end{equation*}
Inequality (\ref{19}) is an immediate consequence of the last inequality and estimate (\ref{21}). The proof of (i) is complete.

(ii) Since inequality \eqref{20} is obvious for $t\in [t_0,1]$, it
can be assumed that $0<t<t_0.$  Let again $f\in Ces_1,$ $f = g + h$, where $g \in Ces_1, h \in Ces_{\infty}$
and $0 \leq g \leq f, 0 \leq h \leq f$. Then for any $c \in (0, 1)$ we have
\begin{eqnarray}
 \| g \|_{Ces(1)} + t \| h \|_{Ces(\infty)}
&\geq&
\| g \chi_{[\tau_1(t), \tau_2(t)]} \|_{Ces(1)} + c\, t\,  \| (f-g) \chi_{[\tau_1(t), \tau_2(t)]} \|_{Ces(\infty)}\nonumber \\
&\geq&
\| g \chi_{[\tau_1(t), \tau_2(t)]} \|_{Ces(1)} -c\, t\, \| g  \chi_{[\tau_1(t), \tau_2(t)]} \|_{Ces(\infty)}\nonumber  \\
&+&
c\, t\, \| f \chi_{[\tau_1(t), \tau_2(t)]} \|_{Ces(\infty)}.
 \label{24}
\end{eqnarray}
We want to show that for every positive function $w \in Ces_1$ with ${\rm supp} \, w \subset [\tau_1(t), \tau_2(t)]$
the following inequality holds:
\begin{equation} \label{25}
\frac{1}{e^2}\, t \, \| w \|_{Ces(\infty)} \leq \| w \|_{Ces(1)} ~~ {\rm for ~any} ~~ 0 < t < t_0.
\end{equation}
Since
\begin{eqnarray*}
\| w \|_{Ces(1)}
&=&
\int_0^1 \frac{1}{s} \left( \int_{\tau_1(t)}^s w(u)\, du \cdot \chi_{[\tau_1(t), \tau_2(t)]}(s) + \int_{\tau_1(t)}^{\tau_2(t)} w(u)\, du \, \chi_{[\tau_2(t),1]}(s) \right) ds\\
&=&
\int_{\tau_1(t)}^{\tau_2(t)} \Big( \frac{1}{s} \int_{\tau_1(t)}^s w(u)\, du\Big) \, ds + \int_{\tau_1(t)}^{\tau_2(t)} w(u)\, du \,
\int_{\tau_2(t)}^1 \frac{ds}{s}\\
&=&
\int_{\tau_1(t)}^{\tau_2(t)} \Big( \int_u^{\tau_2(t)} \frac{ds}{s}\Big) \, w(u) \, du + \int_{\tau_1(t)}^{\tau_2(t)} w(u)\, du \, \ln \frac{1}{{\tau_2(t)}} \\
&=&
\int_{\tau_1(t)}^{\tau_2(t)} w(u)\, \ln \frac{\tau_2(t)}{u}\, du + t\,\int_{\tau_1(t)}^{\tau_2(t)} w(u)\, du,
\end{eqnarray*}
for proving the previous inequality it suffices to show that for all $t \in (0, t_0)$ and $s\in [\tau_1(t),\tau_2(t)]$ we have
\begin{equation} \label{26}
\frac{1}{e^2}\, t \int_{\tau_1(t)}^s w(u) \, du \leq s \Big( \int_{\tau_1(t)}^{\tau_2(t)} w(u)\, \ln \frac{\tau_2(t)}{u}\, du +
t\, \int_{\tau_1(t)}^{\tau_2(t)} w(u)\, du \Big).
\end{equation}
We consider the cases when $s \in [\tau_1(t), \frac{\tau_2(t)}{e}]$ and $s \in (\frac{\tau_2(t)}{e}, \tau_2(t)]$ separately.
Define a unique $t_1 \in (0, t_0)$ such that $\tau_1(t_1) =  \frac{\tau_2(t_1)}{e}$ and note that the segment
$[\tau_1(t), \frac{\tau_2(t)}{e}]$ is non-empty only if $0<t\le t_1.$ Let
$$
\varphi(s):= s \cdot \ln \frac{\tau_2(t)}{s} ~~{\rm for } ~~ s \in [\tau_1(t), \frac{\tau_2(t)}{e}].
$$
Since $\varphi^{\prime}(s) = \ln \frac{\tau_2(t)}{s} -1 = \ln \frac{\tau_2(t)}{e s} \geq 0$ for all $s \in [\tau_1(t), \frac{\tau_2(t)}{e}]$ it follows that $\varphi$ increases. Therefore, $\varphi(s) \geq \varphi(\tau_1(t))$ for all $s \in [\tau_1(t), \frac{\tau_2(t)}{e}]$ and so
\begin{equation} \label{27}
s \, \int_{\tau_1(t)}^{\tau_2(t)} w(u)\, \ln \frac{\tau_2(t)}{u}\, du \geq s \,  \ln \frac{\tau_2(t)}{s} \, \int_{\tau_1(t)}^s w(u) \, du
\geq \tau_1(t) \,  \ln \frac{\tau_2(t)}{\tau_1(t)} \, \int_{\tau_1(t)}^s w(u) \, du.
\end{equation}
We show that
\begin{equation}
\label{28}
\tau_1(t) \ln \frac{\tau_2(t)}{\tau_1(t)} \geq \frac{1}{e^2} \, t ~~ {\rm for ~ all } ~~ 0 < t \leq t_1.
\end{equation}
The function
$$
\psi(t) = \frac{\tau_1(t)}{t} \ln \frac{\tau_2(t)}{\tau_1(t)} = \frac{\ln \frac{\ln\frac{e}{t}}{t} - t}{\ln\frac{e}{t}} ~~ {\rm for} ~~ t \in (0, t_1]
$$
has the derivative $\psi^{\prime}(t) = - [(t+1)(1+\ln \frac{e}{t})+ \ln \tau_1(t)]/[t (\ln \frac{e}{t})^2]$. It is not hard to check
that $\psi$ is increasing
on $(0, t_2)$ and decreasing on $(t_2, t_1]$, where a unique $t_2 \in (0, t_1)$. Hence, by the definition of $t_1$, for all
$t\in (0,t_1]$, we have
$$
\psi(t) \geq \min[\psi(0^+), \psi(t_1)] = \min\left(1, \ln^{-1} \frac{e}{t_1}\right) =\ln^{-1} \frac{e}{t_1}= t_1^{-1}e^{-1-t_1}\ge e^{-2}.
$$
Thus, we obtain inequality (\ref{28}). Combining it with estimate \eqref{27}, we obtain \eqref{26} in the case when
$0<t\le t_1$ and $s \in [\tau_1(t), \frac{\tau_2(t)}{e}]$.

In the second case, when $s \in (\frac{\tau_2(t)}{e}, \tau_2(t)]$, we have $s \geq e^{-1-t} \geq e^{-2}$ and so
$$
t \, \int_{\tau_1(t)}^s w(u)\, du \leq e^2 \, t \, s \, \int_{\tau_1(t)}^{\tau_2(t)} w(u)\, du.
$$
Hence, estimate (\ref{26}) holds again, and so inequality \eqref{25} is proved.
Combining (\ref{25}) and (\ref{24}) with $c=e^{-2}$, we obtain
$$
\| g \|_{Ces(1)} + t \| h \|_{Ces(\infty)} \geq \frac{1}{e^2} \, t \, \| f_1 \|_{Ces(\infty)} ~~ {\rm for ~all} ~~ 0 < t < t_0.
$$
Taking the infimum over all decompositions $f = g + h, g \in Ces_1, h \in Ces_{\infty}$ with $0 \leq g \leq f,
0 \leq h \leq f$ we come to estimate (\ref{20}), and the proof of (ii) is complete.
\end{proof}

\begin{theorem} \label{Thm3} For every function $f \in Ces_1[0, 1]$ we have
\begin{eqnarray*}
\frac{1}{2e^2} \, ( \| f \chi_{[0, \tau_1(t)] \cup [\tau_2(t), 1]} \|_{Ces(1)}
&+&
t \, \| f \chi_{[\tau_1(t), \tau_2(t)]} \|_{Ces(\infty)} ) \\
&\leq&
K(t, f; Ces_1, Ces_{\infty}) \\
&\leq&
 \| f \chi_{[0, \tau_1(t)] \cup [\tau_2(t), 1]} \|_{Ces(1)} + t \, \| f \chi_{[\tau_1(t), \tau_2(t)]} \|_{Ces(\infty)},
\end{eqnarray*}
for all $0 < t < 1$, and
$K(t, f; Ces_1, Ces_{\infty}) =  \| f \|_{Ces(1)}$ for all $t \geq 1$.
\end{theorem}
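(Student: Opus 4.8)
The plan is to assemble the statement from the lower estimates already established in Lemma~\ref{lemma1}, a single explicit decomposition giving the upper estimate, and a separate elementary argument for $t\ge 1$. Throughout I write $f_0=f\chi_{[0,\tau_1(t)]\cup[\tau_2(t),1]}$ and $f_1=f\chi_{[\tau_1(t),\tau_2(t)]}$, and assume $f\ge 0$ (the general case follows from the lattice monotonicity of the $K$-functional). The key structural observation is that $f_0+f_1=f$ pointwise, since the intervals $[0,\tau_1(t)]$, $[\tau_1(t),\tau_2(t)]$, $[\tau_2(t),1]$ cover $[0,1]$. When $t\ge t_0$ the middle interval degenerates to a null set, so that $f_1=0$ and $f_0=f$, and every assertion then reduces to the trivial bound $K(t,f)\le\|f\|_{Ces(1)}$ together with the lower bound furnished by Lemma~\ref{lemma1}(i).

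For the lower estimate on $0<t<1$ I would simply combine the two parts of Lemma~\ref{lemma1}. Adding $K(t,f)\ge\frac14\|f_0\|_{Ces(1)}$ and $K(t,f)\ge\frac{1}{e^2}\,t\,\|f_1\|_{Ces(\infty)}$, and exploiting $e^2>4$, whence $\frac14\ge\frac{1}{e^2}$, yields
\begin{equation*}
2K(t,f)\ge \tfrac{1}{e^2}\|f_0\|_{Ces(1)}+\tfrac{1}{e^2}\,t\,\|f_1\|_{Ces(\infty)},
\end{equation*}
which is exactly the claimed left-hand inequality after dividing by $2$.

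The upper estimate on $0<t<1$ comes from testing the infimum defining $K(t,f)$ against the decomposition $f=f_0+f_1$. First I would verify admissibility, i.e.\ $f_0\in Ces_1$ and $f_1\in Ces_\infty$. The former holds because $0\le f_0\le f\in Ces_1$ and $Ces_1$ is a Banach lattice. For the latter, recall $Ces_1=L_1(\ln 1/s)$, so that $\|f\|_{Ces(1)}=\int_0^1|f(s)|\ln(1/s)\,ds$; since $\ln(1/s)\ge t$ on $[\tau_1(t),\tau_2(t)]=[\tau_1(t),e^{-t}]$, one gets $\int_{\tau_1(t)}^{\tau_2(t)}|f|\,ds\le t^{-1}\|f\|_{Ces(1)}<\infty$, and as $f_1$ vanishes near $0$ its Ces\`aro means are bounded by $\tau_1(t)^{-1}\int_{\tau_1(t)}^{\tau_2(t)}|f|\,ds$, so $f_1\in Ces_\infty$. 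Then by the very definition of the $K$-functional, $K(t,f)\le\|f_0\|_{Ces(1)}+t\|f_1\|_{Ces(\infty)}$, which is the right-hand inequality.

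Finally, for $t\ge 1$ the upper bound $K(t,f)\le\|f\|_{Ces(1)}$ follows from the trivial decomposition $g=f$, $h=0$. For the reverse inequality I would use the contractive embedding $Ces_\infty[0,1]\stackrel{1}{\hookrightarrow}Ces_1[0,1]$ recorded in Section~1: for any $f=g+h$ with $g\in Ces_1$, $h\in Ces_\infty$,
\begin{equation*}
\|f\|_{Ces(1)}\le\|g\|_{Ces(1)}+\|h\|_{Ces(1)}\le\|g\|_{Ces(1)}+\|h\|_{Ces(\infty)}\le\|g\|_{Ces(1)}+t\|h\|_{Ces(\infty)},
\end{equation*}
and taking the infimum over all such decompositions gives $\|f\|_{Ces(1)}\le K(t,f)$. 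Since essentially all the analytic difficulty has already been absorbed into Lemma~\ref{lemma1}, the only points requiring care are the admissibility check $f_1\in Ces_\infty$ and the bookkeeping near $t=t_0$, where the middle interval collapses; neither presents a genuine obstacle.
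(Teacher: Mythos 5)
Your proposal is correct and takes essentially the same route as the paper: the lower bound by summing the two parts of Lemma~\ref{lemma1} (using $\frac14\ge\frac{1}{e^2}$), the upper bound by testing the decomposition $f=f_0+f_1$ in the definition of the $K$-functional, and the case $t\ge 1$ via the contractive imbedding $Ces_\infty[0,1]\stackrel{1}{\hookrightarrow}Ces_1[0,1]$. The extra details you supply (the admissibility check $f_1\in Ces_\infty$ and the degenerate case $t\ge t_0$) are correct elaborations of what the paper's brief proof leaves implicit.
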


\begin{proof} The first inequality is a consequence of Lemma \ref{lemma1} and
the definition of the $K$-functional. The equality
$K(t, f; Ces_1, Ces_{\infty}) = \| f \|_{Ces(1)} $ $(t \geq 1)$ follows from
the imbedding $Ces_{\infty}  \stackrel {1} \hookrightarrow Ces_1.$
\end{proof}

If a positive function $f \in Ces_1[0, 1]$ is decreasing, then the description of the $K$-functional can be simplified.

\begin{theorem} \label{Thm4} If $f \in Ces_1[0, 1], f \geq 0$ and $f$ is decreasing, then
\begin{equation} \label{29}
\frac{1}{3}\, \| f  \chi_{[0, \tau_1(t)]} \|_{Ces(1)} \leq K(t, f; Ces_1, Ces_{\infty}) \leq  \| f  \chi_{[0, \tau_1(t)]} \|_{Ces(1)}
\end{equation}
for all $0 < t  <1$ and $K(t, f; Ces_1, Ces_{\infty}) = \| f \|_{Ces(1)}$ for all $t\ge 1$.
\end{theorem}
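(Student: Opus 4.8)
The plan is to read off both the lower bound and the $t\ge 1$ formula in \eqref{29} from material already established, so that the only genuinely new work is the upper estimate, where the monotonicity of $f$ is decisive. For the lower bound there is nothing to add: inequality \eqref{21} in Lemma~\ref{lemma1} already asserts $K(t,f)\ge \tfrac13\,\|f\chi_{[0,\tau_1(t)]}\|_{Ces(1)}$ for every $f\in Ces_1$ and every $0<t\le 1$, with no monotonicity needed. For $t\ge 1$ the identity $K(t,f)=\|f\|_{Ces(1)}$ follows exactly as in the proof of Theorem~\ref{Thm3}: the trivial splitting $f=f+0$ gives ``$\le$'', while $Ces_\infty\stackrel{1}{\hookrightarrow}Ces_1$, the bound $t\ge 1$, and the triangle inequality give ``$\ge$''. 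Hence the whole theorem reduces to proving
$$
K(t,f)\le \|f\chi_{[0,\tau_1(t)]}\|_{Ces(1)}\qquad(0<t<1).
$$

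To establish this I would fix $0<t<1$, abbreviate $a=\tau_1(t)$, and test the infimum defining $K(t,f)$ on the single decomposition $f=g+h$ with
$$
g=(f-f(a))\chi_{[0,a]},\qquad h=\min(f,f(a))=f(a)\chi_{[0,a]}+f\chi_{[a,1]}.
$$
Since $f$ is decreasing we have $f\ge f(a)$ on $[0,a]$ and $f\le f(a)$ on $[a,1]$, so both summands are nonnegative and $g$ is supported in $[0,a]$; moreover $f(a)<\infty$ because $f$ is decreasing and $f\in Ces_1=L_1(\ln 1/s)$. As $h\le f(a)$ pointwise while $Ch(x)=f(a)$ for $x\le a$, one reads off $\|h\|_{Ces(\infty)}=f(a)$.

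For $g$ I would use that on nonnegative functions $\|u\|_{Ces(1)}=\int_0^1 Cu\,dx$ together with linearity of $C$: writing $g=f\chi_{[0,a]}-f(a)\chi_{[0,a]}$ gives $\|g\|_{Ces(1)}=\|f\chi_{[0,a]}\|_{Ces(1)}-f(a)\,\|\chi_{[0,a]}\|_{Ces(1)}$, and the elementary computation $C\chi_{[0,a]}(x)=\min(x,a)/x$ yields $\|\chi_{[0,a]}\|_{Ces(1)}=a+a\ln(1/a)=a\ln(e/a)$. Combining the two norms,
$$
K(t,f)\le \|g\|_{Ces(1)}+t\,\|h\|_{Ces(\infty)}=\|f\chi_{[0,a]}\|_{Ces(1)}-f(a)\bigl(a\ln(e/a)-t\bigr).
$$

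It then remains only to verify the scalar inequality $t\le a\ln(e/a)$ with $a=\tau_1(t)$, for then the subtracted term is nonnegative and the claimed upper bound follows at once. Setting $L=\ln(e/t)\ge 1$, so that $a=t/L$, one computes $\ln(e/a)=L+\ln L$ and hence $a\ln(e/a)=t\,(1+L^{-1}\ln L)\ge t$, since $L\ge 1$ forces $\ln L\ge 0$. This is the single point at which the precise definition $\tau_1(t)=t/\ln(e/t)$ is used, and it is exactly what makes the $Ces_1$- and $Ces_\infty$-contributions balance so that no $Ces_\infty$-term survives on the right. I expect the main (if modest) obstacle to be isolating the correct ``cut height'' $f(a)$ in the decomposition; once that is chosen, the norm computations and the concluding inequality are routine.
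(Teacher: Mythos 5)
Your proposal is correct and follows essentially the same route as the paper: the paper also reduces to the upper bound via inequality \eqref{21} and the imbedding $Ces_\infty\stackrel{1}{\hookrightarrow}Ces_1$, and then tests $K(t,f)$ on exactly your decomposition $f_0=(f-f(\tau_1(t)))\chi_{[0,\tau_1(t)]}$, $f_1=f-f_0=\min(f,f(\tau_1(t)))$, with $\|f_1\|_{Ces(\infty)}=f(\tau_1(t))$. Your final scalar inequality $\tau_1(t)\ln(e/\tau_1(t))\ge t$ is algebraically the same as the paper's identification of the surplus term as $t f(\tau_1(t))\ln\ln(e/t)/\ln(e/t)\ge 0$.
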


\begin{proof} Taking into account the proof of Lemma 1(i) (see inequality (\ref{21})) it suffices to prove only the right-hand side
inequality in (\ref{29}).

Let $f_0: =  [ f - f(\tau_1(t)) ] \chi_{[0, \tau_1(t)]}$ and $f_1: = f - f_0$. Since $f \geq 0$ is decreasing, we have
$\| f_1\|_{Ces(\infty)} = f(\tau_1(t)).$  Therefore, by Fubini theorem,
$$
\| f_0 \|_{Ces(1)} + t\, \| f_1 \|_{Ces(\infty)} =
\int_0^1 \frac{1}{s} \int_0^s \left( f(u) - f(\tau_1(t)) \right)  \chi_{[0, \tau_1(t)]}(u) \, du \, ds + t \, f(\tau_1(t))
$$
$$
= \int_0^1 \frac{1}{s} \int_0^s f(u) \chi_{[0, \tau_1(t)]}(u) \, du \, ds - f(\tau_1(t)) \int_0^{\tau_1(t)} \ln \frac{1}{u} \, du + t \, f(\tau_1(t))
$$

\begin{eqnarray*}
&=&
\|f  \chi_{[0, \tau_1(t)]} \|_{Ces(1)}
-f(\tau_1(t)) \, \tau_1(t) \, \left[1 + \ln \frac{1}{\tau_1(t)} \right] + t \, f(\tau_1(t))\\
&=&
\|f  \chi_{[0, \tau_1(t)]} \|_{Ces(1)} + t \, f(\tau_1(t)) \left [ 1 - \frac{1 + \ln(\ln \frac{e}{t}/t)}{\ln \frac{e}{t}} \right]\\
&=&
\|f  \chi_{[0, \tau_1(t)]} \|_{Ces(1)} - \frac{t f(\tau_1(t))\ln(\ln \frac{e}{t})}{\ln \frac{e}{t}}\le \|f  \chi_{[0, \tau_1(t)]} \|_{Ces(1)},
\end{eqnarray*}
whence
\begin{equation*}
K(t, f; Ces_1, Ces_{\infty}) \leq \| f  \chi_{[0, \tau_1(t)]} \|_{Ces(1)},
\end{equation*}
and the desired result is proved.
\end{proof}

\vspace{3 mm}
\begin{center}
{\bf 6. Indentification of the real interpolation spaces $(Ces_1[0, 1], Ces_{\infty}[0, 1])_{1-1/p, p}$ for $1 < p < \infty$}
\end{center}

Let us define the weighted Ces\`aro function space $ Ces_p(\ln \frac{e}{t})[0, 1]$ consisting
of all Lebesgue measurable functions $f$ on $[0, 1]$ such that
\begin{equation*}
\| f \| _{Ces(p, \ln)}: = \Big( \int_0^1 \Big(\frac{1}{x} \int_0^x |f(t)| \, dt\Big)^p \, \ln \frac{e}{x} \, dx \Big)^{1/p} < \infty.
\end{equation*}
Clearly, $ Ces_p(\ln \frac{e}{t})[0, 1]  \stackrel {1} \hookrightarrow Ces_p[0, 1]$ for every $1<p<\infty,$ and
this imbedding is strict.

\begin{theorem} \label{Thm5} For $1 < p < \infty$
\begin{equation} \label{30}
(Ces_1[0, 1], Ces_{\infty}[0, 1])_{1-1/p, p} = Ces_p(\ln \frac{e}{t})[0, 1].
\end{equation}
\end{theorem}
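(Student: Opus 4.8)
The plan is to substitute the two-sided estimate for $K(t,f):=K(t,f;Ces_1,Ces_\infty)$ from Theorem \ref{Thm3} into the definition of the $K$-norm and to show the result is equivalent to $\|f\|_{Ces(p,\ln)}$. Assuming $f\ge 0$ (both norms depend only on $|f|$) and writing $\theta=1-1/p$, so that $t^{-\theta p-1}=t^{-p}$, I would start from
$$
\|f\|_{\theta,p}^p=\int_0^\infty t^{-p}K(t,f)^p\,dt=\int_0^1 t^{-p}K(t,f)^p\,dt+\frac{1}{p-1}\|f\|_{Ces(1)}^p ,
$$
using $K(t,f)=\|f\|_{Ces(1)}$ for $t\ge 1$. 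Since on $(0,1)$ the $K$-functional is comparable to a sum, the inequality $(a+b)^p\approx a^p+b^p$ reduces everything (up to constants) to the three quantities
$$
I_1=\int_0^1 t^{-p}\,\|f\chi_{[0,\tau_1(t)]\cup[\tau_2(t),1]}\|_{Ces(1)}^p\,dt,\qquad
I_2=\int_0^1 \|f\chi_{[\tau_1(t),\tau_2(t)]}\|_{Ces(\infty)}^p\,dt,
$$
together with the tail $\tfrac{1}{p-1}\|f\|_{Ces(1)}^p$, and the goal becomes $I_1+I_2+\|f\|_{Ces(1)}^p\approx\|f\|_{Ces(p,\ln)}^p$.

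The lower estimate $\|f\|_{Ces(p,\ln)}\le C\|f\|_{\theta,p}$ I expect to be the cleaner half. Using Lemma \ref{lemma1} (in the form of inequality (\ref{21})) and the identity $Ces_1=L_1(\ln\tfrac1s)$, one has $K(t,f)\ge\tfrac13\int_0^{\tau_1(t)}f(s)\ln\tfrac1s\,ds$. Changing variables $x=\tau_1(t)$, whose inverse $\sigma=\tau_1^{-1}$ satisfies $\sigma(x)\approx x\ln\tfrac ex$ and $\sigma'(x)\approx\ln\tfrac ex$, converts the weight $t^{-p}$ into $x^{-p}(\ln\tfrac ex)^{1-p}$. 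An integration by parts gives
$$
\int_0^x f(s)\ln\tfrac1s\,ds=x\,\ln\tfrac1x\,Cf(x)+\int_0^x Cf(s)\,ds ,
$$
and retaining the first (nonnegative) term already reproduces $\int_0^1 Cf(x)^p\ln\tfrac ex\,dx$ up to a constant on $(0,e^{-1}]$; the contribution near $x=1$ (where $\ln\tfrac ex\approx 1$) is recovered from the $[\tau_2(t),1]$ part of estimate (\ref{19}).

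For the upper estimate $\|f\|_{\theta,p}\le C\|f\|_{Ces(p,\ln)}$ I would treat the three pieces separately. The term $I_1$ is handled exactly as above: rewrite the $Ces(1)$-norms as $\int f(s)\ln\tfrac1s\,ds$, apply the substitution and integration by parts, and reduce the surviving piece $\int_0^1 x^{-p}(\ln\tfrac ex)^{1-p}\bigl(\int_0^x Cf\bigr)^p dx$ to a weighted Hardy inequality with source weight $\ln\tfrac ex$ and target weight $(\ln\tfrac ex)^{1-p}$, which holds by the Muckenhoupt criterion; the $[\tau_2(t),1]$ piece is symmetric and uses the dual (Copson) inequality. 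The tail is controlled by H\"older's inequality, since $\|f\|_{Ces(1)}=\int_0^1 Cf(x)\,dx\le\bigl(\int_0^1 Cf^p\ln\tfrac ex\,dx\bigr)^{1/p}\bigl(\int_0^1(\ln\tfrac ex)^{-p'/p}dx\bigr)^{1/p'}\le C\|f\|_{Ces(p,\ln)}$.

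The main obstacle is the term $I_2=\int_0^1\bigl(\sup_{\tau_1(t)\le x\le\tau_2(t)}\tfrac1x\int_{\tau_1(t)}^x f\bigr)^p dt$. The tempting crude bound $\sup_{x\in[\tau_1(t),\tau_2(t)]}\tfrac1x\int_{\tau_1(t)}^x f\le\sup_{x\in[\tau_1(t),\tau_2(t)]}Cf(x)$ loses too much: a distribution-function computation shows that $|\{t\in(0,1):[\tau_1(t),\tau_2(t)]\cap\{Cf>\lambda\}\ne\emptyset\}|$ need not be dominated by $\int_{\{Cf>\lambda\}}\ln\tfrac ex\,dx$ (it fails when $f$ concentrates near a point), so replacing the $Ces_\infty$-norm by $\sup Cf$ cannot yield the weighted Cesàro norm. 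The delicate point is therefore to keep the genuinely \emph{localized} average $\tfrac1x\int_{\tau_1(t)}^x f$ and estimate $I_2$ as a maximal-type operator, matching the Lebesgue measure of the relevant $t$-set to the measure $\int\ln\tfrac ex\,dx$ through the precise relations $\tau_1(t)\approx t/\ln\tfrac et$ and $\tau_2(t)=e^{-t}$. Once $I_2\le C\|f\|_{Ces(p,\ln)}^p$ is secured, combining it with the bounds for $I_1$ and the tail, and with the reverse inequality from the previous paragraph, yields (\ref{30}).
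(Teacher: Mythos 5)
Your framework coincides with the paper's: substitute the two-sided estimate of Theorem \ref{Thm3} into the $K$-norm, treat the $Ces_1$-pieces by the change of variables $x=\tau_1(t)$ (with $(\tau_1^{-1})'\approx\ln\tfrac{e}{x}$) plus Hardy-type inequalities, and handle the tail by H\"older. Your treatment of those pieces, and of the lower estimate via inequality (\ref{21}), is essentially the paper's argument (the paper disposes of the contribution near $x=1$ in the lower bound more cheaply, via the already-proved imbedding (\ref{imbedding9}) rather than via the $[\tau_2(t),1]$ part of (\ref{19}); your route could be completed, but it needs the Copson-type inequality $\|C^*h\|_p\lesssim\|Ch\|_p+\|h\|_1$ from Theorem \ref{Thm1}(iii), which you do not supply).

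The genuine gap is the term you call $I_2$ (the paper's $I_3$), i.e.\ $\int_0^{t_0}\|f\chi_{[\tau_1(t),\tau_2(t)]}\|_{Ces(\infty)}^p\,dt$. You correctly diagnose that the crude bound by $\sup Cf$ loses too much and that one must keep the localized average $\tfrac1s\int_{\tau_1(t)}^s f$, but you then stop at ``once $I_2\le C\|f\|_{Ces(p,\ln)}^p$ is secured,'' which is precisely the step that carries the new content of the theorem. The paper closes it with two ideas you do not have: first, the pointwise domination
\begin{equation*}
\sup_{\tau_1(t)<s\le 1/2}\frac1s\int_{\tau_1(t)}^{s}|f(u)|\,du
\;\le\;\frac{2}{\ln 2}\,MCf(\tau_1(t)),
\end{equation*}
obtained by averaging $Cf$ over $[\tau_1(t),2s]$ (so the localized average of $f$ is controlled by a genuine Hardy--Littlewood maximal average of $Cf$ at the point $\tau_1(t)$); and second, Lemma \ref{Lemma2}, stating that $M$ is bounded on $L_p(\ln\tfrac ex)[0,1]$ because the weight $\ln\tfrac ex$ satisfies Muckenhoupt's $A_p$-condition. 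After the substitution $t=\tau_1^{-1}(s)$ these two facts give $I_{31}\lesssim\|MCf\|_{L_p(\ln\frac es)}\lesssim\|f\|_{Ces(p,\ln)}$, while the range $s>1/2$ is handled crudely with $s=e^{-t}$. Without the maximal-function domination and the $A_p$ lemma (or some substitute), your proposal does not prove the upper imbedding, so the proof is incomplete at its decisive point.
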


\begin{proof}
Denote $X_p = (Ces_1, Ces_{\infty})_{1-1/p, p}, 1 < p < \infty$. Using Theorem \ref{Thm3} on the $K$-functional
for the couple $(Ces_1, Ces_{\infty})$ on $[0, 1],$ we have
\begin{eqnarray*}
\| f \|_{X_p}
&\leq&
\Big[ \int_0^{t_0} t^{-p} \| f \chi_{[0, \tau_1(t)]}\|_{Ces(1)}^p \, dt \Big]^{1/p} + \Big[ \int_0^{t_0} t^{-p} \| f \chi_{[\tau_2(t), 1]}\|_{Ces(1)}^p \, dt \Big]^{1/p} \\
&+&
\Big[ \int_0^{t_0} t^{-p} (t \| f  \chi_{[\tau_1(t), \tau_2(t)]}\|_{Ces(\infty)})^p \, dt \Big]^{1/p} + \Big[ \int_{t_0}^{\infty} t^{-p} \| f \|_{Ces(1)}^p \, dt \Big]^{1/p} \\
&=&
I_1 + I_2 + I_3 + I_4,
\end{eqnarray*}
where
\begin{eqnarray*}
I_1 &=&
\Big[ \int_0^{t_0} t^{-p} \Big( \int_0^{\tau_1(t)} Cf(s) \, ds + \int_{\tau_1(t)}^1 C(f \chi_{[0, \tau_1(t)]})(s)\, ds \Big)^p \, dt  \Big]^{1/p}\\
&\leq&
\Big[ \int_0^{t_0} t^{-p} \Big( \int_0^{\tau_1(t)} Cf(s) \, ds\Big)^p \, dt  \Big]^{1/p} + \Big[ \int_0^{t_0} t^{-p}
\Big( \int_{\tau_1(t)}^1 C(f \chi_{[0, \tau_1(t)]})(s)\, ds\Big)^p \, dt  \Big]^{1/p}\\
&=&
I_{11} + I_{12}.
\end{eqnarray*}

First of all, we estimate all five integrals from above. Since $\tau_1^{\prime}(t) = (\ln \frac{e}{t} + 1)/(\ln \frac{e}{t})^2$
and so $1/\ln(e/t) \leq \tau_1^{\prime}(t) \leq 2/\ln(e/t)$
for all $0 < t \leq 1$, we get
\begin{eqnarray*}
I_{11}^p
&\leq&
\int_0^{t_0} t^{-p} (\ln \frac{e}{t})^{p-1} \Big( \int_0^{\tau_1(t)} Cf(s) \, ds\Big)^p \, dt  \\
&\leq&
\int_0^{t_0} \tau_1(t)^{-p} \Big( \int_0^{\tau_1(t)} Cf(s) \, ds\Big)^p \, d \tau_1(t).
\end{eqnarray*}
Putting $u = \tau_1(t)$ and using classical Hardy inequality, we obtain
\begin{eqnarray*}
I_{11}
&\leq&
\Big[ \int_0^{\tau_1(t_0)} \Big(\frac{1}{u} \int_0^u Cf(s) \, ds\Big)^p \, du \Big]^{1/p} \leq \| C^2 f \|_{L_p[0, 1]} \\
&\leq&
p' \, \| C f \|_{L_p[0, 1]} = p'\, \| f \|_{Ces(p)} \leq p'\, \| f \|_{Ces(p, \ln)}.
\end{eqnarray*}
Next, by the estimate $\ln \frac{1}{\tau_1(t)} \leq 2 \ln \frac{e}{t},$ $0 < t \leq 1,$ we get
\begin{eqnarray*}
I_{12}^p
&=&
\int_0^{t_0} t^{-p} \Big( \int_{\tau_1(t)}^1 \Big( \frac{1}{s} \int_0^{\tau_1(t)} |f(u)| \, du\Big) \, ds \Big)^p \, dt \\
&=&
\int_0^{t_0} t^{-p} \Big( \int_0^{\tau_1(t)} |f(u)| \, du\Big)^p \, \ln^p \frac{1}{\tau_1(t)} \, dt \\
&\leq&
2^p \, \int_0^{t_0} \tau_1(t)^{-p} \Big( \int_0^{\tau_1(t)} |f(u)| \, du\Big)^p  \, dt.
\end{eqnarray*}
Substitution $t = \tau_1^{-1}(s)$ and the inequalities
\begin{equation} \label{31}
(\tau_1^{-1})^{\prime}(s) = \frac{1}{\tau_1^{\prime}(\tau_1^{-1}(s))} \leq \ln \frac{e}{\tau_1^{-1}(s)} \leq \ln \frac{e}{s}
\end{equation}
show that
\begin{eqnarray*}
I_{12}
&\le&
2 \, \Big [  \int_0^{\tau_1(t_0)} \Big(\frac{1}{s} \int_0^s |f(u)| \, du\Big)^p \ln \frac{e}{s} \, ds \Big ]^{1/p} \\
&\leq&
2 \, \Big [  \int_0^1 (Cf(s))^p \ln \frac{e}{s} \, ds \Big ]^{1/p}  = 2 \, \| f \|_{Ces(p, \ln)}.
\end{eqnarray*}
From the equality $Ces_1[0, 1] = L_1(\ln 1/u)$ and the inequalities $\ln 1/u \leq e (1 - u)$ $(1/e\le u\le 1)$ and
$\tau_2(t) = e^{-t} \geq 1 - t$ $(0 < t \leq 1)$  it follows
\begin{eqnarray*}
I_2^p
&=&
\int_0^{t_0} t^{-p} \| f \chi_{[\tau_2(t), 1]} \|_{Ces(1)}^p \, dt =
\int_0^{t_0} t^{-p} \Big( \int_{\tau_2(t)}^1 |f(u)| \ln \frac{1}{u} \, du\Big)^p \, dt \\
&\leq&
e^p \, \int_0^{t_0} t^{-p} \Big( \int_{\tau_2(t)}^1 |f(u)| (1 - u) \, du\Big)^p \, dt \\
&\leq&
e^p \, \int_0^{t_0} t^{-p} \Big( \int_{1 - t}^1 |f(u)| (1 - u) \, du\Big)^p \, dt.
\end{eqnarray*}
Arguing in the same way as in the second part of the proof of Theorem
\ref{Thm2}, for $g = f \chi_{[e^{-1}, 1]}$ and $0 < s \leq 1$ we have
$$
Cg(s) = \frac{1}{s} \int_{e^{-1}}^s |f(u)| \, du \geq  \int_{e^{-1}}^s |f(u)| \, du,
$$
whence $(Cg)^*(s) \geq  \int_{e^{-1}}^{1-s} |f(u)| \, du$ and
\begin{eqnarray*}
\int_0^t (Cg)^*(s) \, ds
&\geq&
\int_0^t \Big(\int_{e^{-1}}^{1-s} |f(u)| \, du\Big) \, ds =
\int_{e^{-1}}^{1 - t} \Big( \int_0^t |f(u)| \, ds\Big) \, du\\
&+& \int_{1-t}^{1} \Big( \int_0^{1-u} |f(u)| \, ds\Big) \, du
\geq  \int_{1-t}^{1}  |f(u)| (1 - u) \, du.
\end{eqnarray*}
Therefore, again by the classical Hardy inequality,
\begin{eqnarray*}
I_2
&\leq&
e \, \Big[ \int_0^{t_0} t^{-p} \Big(\int_0^t (Cg)^*(s) \, ds\Big)^p \, dt \Big]^{1/p} \\
&\leq&
e \, || C[(Cg)^*] \|_{L_p[0, 1]} \leq e p'\| (Cg)^*\|_{L_p[0, 1]} \\
&=&
e p'\| Cg \|_{L_p[0, 1]} = e p'\, \| f \chi_{[e^{-1}, 1]} \|_{Ces(p)} \\
&\leq&
e p' \, \| f \|_{Ces(p)} \leq e p'\, \| f \|_{Ces(p, \ln)}.
\end{eqnarray*}
For the third integral, we have
\begin{eqnarray*}
I_3
&=&
\Big[ \int_0^{t_0}  \| f  \chi_{[\tau_1(t), \tau_2(t)]}\|_{Ces(\infty)}^p \, dt \Big]^{1/p} \\
&\leq &
\Big[ \int_0^{t_0} \sup_{\tau_1(t) < s \leq 1/2} \Big( \frac{1}{s} \int_0^s |f(u) \chi_{[\tau_1(t), \tau_2(t)]}(u)| \, du \Big)^p \, dt \Big]^{1/p} \\
&+&
\Big[ \int_0^{t_0} \sup_{1/2 < s \leq \tau_2(t)} \Big( \frac{1}{s} \int_0^s |f(u) \chi_{[\tau_1(t), \tau_2(t)]}(u)| \, du \Big)^p \, dt \Big]^{1/p} \\
&=&
\Big[ \int_0^{t_0} \sup_{\tau_1(t) < s \leq 1/2} \Big( \frac{1}{s} \int_{\tau_1(t)}^s |f(u) | \, du \Big)^p \, dt \Big]^{1/p} \\
&+&
\Big[ \int_0^{t_0} \sup_{1/2 < s \leq \tau_2(t)} \Big( \frac{1}{s} \int_{\tau_1(t)}^s |f(u)| \, du \Big)^p \, dt \Big]^{1/p} = I_{31} + I_{32}.
\end{eqnarray*}
If $\tau_1(t) < s \leq 1/2$, then $2s \leq 1$ and
\begin{eqnarray*}
\int_{\tau_1(t)}^{2s} \Big(\frac{1}{v} \int_0^v |f(u)| \, du\Big)\, dv
&=&
\int_0^{\tau_1(t)} \Big( \int_{\tau_1(t)}^{2s} \frac{1}{v} \, dv\Big) \, |f(u)| \, du + \int_{\tau_1(t)}^{2s}
\Big( \int_u^{2s}  \frac{1}{v} \, dv\Big)\, |f(u)| \, du \\
&=&
\int_0^{\tau_1(t)} |f(u)| \, du \ln \frac{2 s}{\tau_1(t)} + \int_{\tau_1(t)}^{2s} |f(u)| \, \ln \frac{2s}{u} \, du \\
&\geq &
\frac{2s - \tau_1(t)}{2s} \, \int_{\tau_1(t)}^{2s} |f(u)| \, \ln \frac{2s}{u} \, du\\
&\geq&
\ln 2 \, \frac{2s - \tau_1(t)}{2s} \, \int_{\tau_1(t)}^{s} |f(u)| \, du.
\end{eqnarray*}
Thus,
\begin{eqnarray*}
\sup_{\tau_1(t) < s \leq 1/2} \frac{1}{s}  \int_{\tau_1(t)}^{s} |f(u)| \, du
&\leq&
\frac{2}{\ln 2} \sup_{\tau_1(t) < s \leq 1/2} \frac{1}{2s - \tau_1(t)}  \int_{\tau_1(t)}^{2s} Cf(v) \, dv \\
&\leq&
\frac{2}{\ln 2} \, MCf(\tau_1(t)),
\end{eqnarray*}
where $M$ is the maximal Hardy-Littlewood operator on $[0, 1]$.
The above estimates show that
$$
I_{31} \leq \frac{2}{\ln 2} \Big ( \int_0^{t_0} MC f(\tau_1(t))^p \, dt \Big)^{1/p}.
$$
Using once again substitution $t = \tau_1^{-1}(s)$ and estimates \eqref{31}, we obtain
\begin{eqnarray*}
I_{31}
&\leq&
\frac{2}{\ln 2} \Big [ \int_0^{\tau_1(t_0)} [MC f(s)]^p \ln \frac{e}{s} \, ds \Big]^{1/p}
\leq \frac{2}{\ln 2} \, \| MCf \|_{L_p(\ln \frac{e}{s})}.
\end{eqnarray*}
We will show in the next lemma that the maximal operator $M$ is bounded in $L_p(\ln \frac{e}{s})[0,1]$ for $1 < p < \infty$,
which implies that for some constant $B_p \geq 1,$ which depends only on $p$, we have
$$
I_{31} \leq \frac{2B_p}{\ln 2} \, \| Cf \|_{L_p(\ln \frac{e}{s})} = \frac{2B_p}{\ln 2} \, \| f \|_{Ces(p, \ln)}.
$$
For the second part of the integral $I_3$ we estimate in the following way:
\begin{eqnarray*}
I_{32}^p
&=&
\int_0^{t_0} \sup_{1/2 < s \leq \tau_2(t)} \Big( \frac{1}{s} \int_{\tau_1(t)}^s |f(u)| \, du \Big)^p \, dt \leq
2^p \, \int_0^{t_0} \Big( \int_{\tau_1(t)}^{\tau_2(t)} |f(u)| \, du \Big)^p \, dt \\
&\leq&
2^p \, \int_0^{t_0} \Big(\frac{1}{\tau_2(t)} \int_0^{\tau_2(t)} |f(u)| \, du \Big)^p \, dt,
\end{eqnarray*}
and, changing variable $s = \tau_2(t) = e^{-t},$ we obtain
\begin{eqnarray*}
I_{32}
& \leq &
2 \, \Big[ \int_{e^{- t_0}}^1 \Big(\frac{1}{s} \int_0^s |f(u)| \, du\Big)^p \, \frac{ds}{s} \Big]^{1/p} \leq
2 e^{t_0/p} \, \Big( \int_0^1 Cf(s)^p \, ds\Big)^{1/p} \\
&\leq&
2 e \, \| f \|_{Ces(p)} \leq 2 e \, \| f \|_{Ces(p, \ln)}.
\end{eqnarray*}
Since $t_0>1/2$, for the last integral we have
$$
I_4 = \frac{1}{(p-1)^{1/p} t_0^{1-1/p}}\, \| f \|_{Ces(1)} \leq \frac{2}{p-1} \, \| f \|_{Ces(1)} \leq \frac{2}{p-1} \, \| f \|_{Ces(p, \ln)}.
$$
Finally, summing up the above estimates, we get
$\| f \|_{X_p} \leq C_p\| f \|_{Ces(p, \ln)},$
where $C_p$ depends only on $p.$
Thus, the imbedding $Ces(p, \ln) \hookrightarrow X_p$ is proved.

Now, we proceed with estimations from below. Firstly, by inequality \eqref{21}, we have
\begin{equation}\label{32}
\| f \|_{X_p}^p \geq
3^{-p} \, \int_0^{t_0} t^{-p} \| f \chi_{[0, \tau_1(t)]} \|_{Ces(1)}^p dt=3^{-p} \, I_1^p\ge 3^{-p} \, I_{12}^p.
\end{equation}
It is not hard to check that $ \ln \frac{1}{\tau_1(t)} = \ln \frac{\ln(e/t)}{t} \geq e^{-1} \, \ln \frac{e}{t} $
for $t \in (0, t_0].$ Therefore,
$$
I_{12}^p = \int_0^{t_0} t^{-p} \Big( \int_0^{\tau_1(t)} |f(u)| \, du\Big)^p \, \ln^p \frac{1}{\tau_1(t)} \, dt
\geq e^{-p} \, \int_0^{t_0} \tau_1(t)^{-p} \Big(\int_0^{\tau_1(t)} |f(u)| \, du\Big)^p \, dt.
$$
Since $\tau_1'(s)\le 2/\ln(e/s)$, $\tau_1^{-1}(s)\le s\ln(e/s)$ and $\ln \ln (e/s)\le e^{-1}\ln(e/s)$ $(0<s\le 1)$,
we have
\begin{eqnarray*}
(\tau_1^{-1})^{\prime}(s)
&=&
\frac{1}{\tau_1^{\prime}(\tau_1^{-1}(s))} \geq \frac{1}{2} \, \ln \frac{e}{\tau_1^{-1}(s)} \geq
\frac{1}{2} \, \ln \frac{e}{s \ln(e/s)} \\
&=&
\frac{1}{2} \, (\ln \frac{e}{s} - \ln \ln \frac{e}{s}) \geq \frac{1}{2} \, (1-\frac{1}{e}) \ln \frac{e}{s}.
\end{eqnarray*}
Hence, after substitution $t = \tau_1^{-1}(s),$ we obtain
$$
I_{12}^p \geq e^{-p} \frac{1}{2} (1 - \frac{1}{e})\, \int_0^{\tau_1(t_0)} \Big(\frac{1}{s} \int_0^s |f(u)| \, du\Big)^p
\,\ln \frac{e}{s} \, ds
\geq \frac{1}{4} \, e^{-p} \, \int_0^{\tau_1(t_0)} Cf(s)^p \ln \frac{e}{s} \, ds,
$$
and so, taking into account \eqref{32}, we get
$$
\| f \|_{X_p}^p \geq 4^{-1} \, (3e)^{-p} \,\int_0^{\tau_1(t_0)} Cf(s)^p \ln \frac{e}{s} \, ds.
$$
On the other hand, by the definition of $t_0$,
\begin{eqnarray*}
\int_{\tau_1(t_0)}^1 Cf(s)^p \ln \frac{e}{s} \, ds
&\leq&
\ln \frac{e}{\tau_1(t_0)} \, \int_{\tau_1(t_0)}^1 Cf(s)^p \, ds \leq (1+t_0) \| f\|_{Ces(p)}^p \\
&\leq&
2 \, \| f\|_{Ces(p)}^p \leq 2 \, \| f\|_{X_p}^p,
\end{eqnarray*}
where the last inequality follows from imbedding
\eqref{imbedding9} of Theorem \ref{Thm2}. Hence,
$$
\| f\|_{X_p} \geq 8^{-1/p} \, (3e)^{-1} \, \Big(\int_0^1 Cf(s)^p \ln \frac{e}{s} \, ds\Big)^{1/p}
\geq \frac{1}{72} \, \| f \|_{Ces(p, \ln)},
$$
and the imbedding $X_p \hookrightarrow Ces(p, \ln)$ is proved. Thus, the proof of Theorem \ref{Thm5}
will be finished if we show that the following lemma holds.
\end{proof}

\begin{lemma} \label{Lemma2} If $1 < p < \infty$, then the maximal Hardy-Littlewood operator  $M$ on $[0, 1]$ is
bounded in the weighted space $L_p(\ln \frac{e}{x})[0,1]=L_p([0, 1], \ln \frac{e}{x} dx).$
\end{lemma}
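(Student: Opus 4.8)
The natural route is through Muckenhoupt weight theory: the Hardy--Littlewood maximal operator $M$ is bounded on $L_p(w)[0,1]$, $1<p<\infty$, provided the weight $w$ satisfies the Muckenhoupt $A_p$-condition over subintervals of $[0,1]$, i.e. $\sup_I \big(\tfrac{1}{|I|}\int_I w\big)\big(\tfrac{1}{|I|}\int_I w^{-1/(p-1)}\big)^{p-1}<\infty$, the supremum being taken over all intervals $I\subset[0,1]$. Thus the plan is to verify that $w(x)=\ln\frac{e}{x}$ meets this condition and then invoke the one-dimensional Muckenhoupt theorem. In fact I would prove the stronger statement that $w\in A_1$, that is $Mw\le Cw$ a.e., since $A_1\subset A_p$ for every $p>1$; this avoids having to deal with the conjugate weight $w^{-1/(p-1)}$ altogether.

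The verification of the $A_1$-condition is the heart of the argument. Note that $w(x)=1+\ln\frac1x$ is positive, decreasing, and bounded below by $1$ on $[0,1]$, and its only singularity is the logarithmic blow-up at $x=0$; hence $\operatorname{ess\,inf}_{[c,d]}w=w(d)$. For an arbitrary subinterval $I=[c,d]\subset(0,1]$ one computes directly, using $\tfrac{d}{dt}(2t-t\ln t)=1-\ln t$, that
\begin{equation*}
\frac{1}{d-c}\int_c^d \Big(1+\ln\frac1t\Big)\,dt = 2-\frac{d\ln d-c\ln c}{d-c},
\end{equation*}
and, writing $c=sd$ with $0\le s<1$, the right-hand side equals $2-\ln d+\frac{s\ln s}{1-s}$. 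Since $\frac{s\ln s}{1-s}\le 0$ for $s\in[0,1)$, this average is at most $2-\ln d$, which in turn does not exceed $2(1-\ln d)=2\,w(d)=2\operatorname{ess\,inf}_I w$ because $\ln d\le 0$. Consequently $\frac{1}{|I|}\int_I w\le 2\,\operatorname{ess\,inf}_I w$ for every $I\subset[0,1]$, which is exactly the $A_1$-condition with constant $2$; and since for $x\in I$ we have $w(x)\ge w(d)$, taking the supremum over intervals $I$ containing a fixed $x$ gives $Mw(x)\le 2\,w(x)$.

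With $w\in A_1\subset A_p$ established, the Muckenhoupt theorem yields the boundedness of $M$ on $L_p(\ln\frac{e}{x})[0,1]$ for every $1<p<\infty$, which is the claim. The only delicate point is the uniform control of the average of $w$ over all subintervals, in particular those abutting the singularity at $x=0$; the computation above shows that the logarithmic growth is mild enough that even the worst case (intervals of the form $[0,d]$) keeps the ratio average/infimum bounded by $2$, the bound $w\ge 1$ being precisely what prevents the negative term $\frac{s\ln s}{1-s}$ from spoiling the estimate. A self-contained alternative, avoiding the weight machinery, would be to write $\ln\frac{e}{x}=1+\int_x^1\frac{dt}{t}$, apply the unweighted $L_p$-boundedness of $M$ together with Fubini's theorem, and then localize $M$ scale by scale; this route is available but computationally heavier, so I would prefer the $A_1$ verification.
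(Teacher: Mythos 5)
Your proof is correct, and it rests on the same pillar as the paper's proof --- Muckenhoupt's theorem characterizing boundedness of $M$ on $L_p([0,1],w(x)\,dx)$ by the $A_p$-condition over subintervals of $[0,1]$ --- but your verification of the weight condition is genuinely different. The paper checks the $A_p$-condition for $w(x)=\ln\frac{e}{x}$ directly: it estimates both $\int_a^b \ln\frac{e}{x}\,dx$ and $\int_a^b \big(\ln\frac{e}{x}\big)^{-1/(p-1)}\,dx$, reduces each to the worst case $a=0^+$ via the monotonicity of two auxiliary functions $\varphi_1,\varphi_2$, and lands on the bound $\ln(e^2/b)/\ln(e/b)\le 2$. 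You instead prove the stronger statement $w\in A_1$ with constant $2$: your closed-form computation of the average, $\frac{1}{d-c}\int_c^d(1-\ln t)\,dt = 2-\ln d+\frac{s\ln s}{1-s}$ with $c=sd$, is correct, and since $w$ is decreasing, $\operatorname{ess\,inf}_{[c,d]}w=w(d)=1-\ln d$, so the average is at most $2-\ln d\le 2w(d)$; the endpoint case $c=0$ is covered by $s=0$. You then invoke $A_1\subset A_p$, which is a one-line consequence of $w(x)^{-1/(p-1)}\le(\operatorname{ess\,inf}_I w)^{-1/(p-1)}$ on $I$ --- worth spelling out explicitly, since the paper never mentions $A_1$ and Muckenhoupt's theorem is cited only in its $A_p$ form. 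What your route buys: it avoids the conjugate weight $w^{-1/(p-1)}$ entirely, the constant $2$ is independent of $p$, and you get the pointwise bound $Mw\le 2w$ (hence also weak-type $(1,1)$ boundedness on $L_1(w)$) for free. What the paper's route buys: it verifies exactly the condition stated in the cited theorem, with no appeal to the $A_1\Rightarrow A_p$ implication, at the cost of a slightly more involved two-integral estimate for each fixed $p$.
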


\vspace{-2mm}

\begin{proof} Muckenhoupt \cite[Theorem 2]{Mu} proved that the maximal operator $M$ on $[0, 1]$ is bounded  in
$L_p([0, 1], w(x) dx)$ if and only if the weight $w(x)$ satisfies the so-called {\it $A_p$-condition} on $[0, 1]$, that is,
$$
\sup_{(a, b) \subset [0, 1]} \Big(\frac{1}{b-a} \int_a^b w(x)\, dx\Big)\, \Big(\frac{1}{b-a} \int_a^b w(x)^{-1/(p-1)} \, dx\Big)^{p-1}
< \infty.$$
Therefore, it is enough to show that for all intervals $(a, b) \subset [0, 1]$ we have
\begin{equation} \label{33}
\int_a^b \ln \frac{e}{x} \, dx \, \Big( \int_a^b (\ln \frac{e}{x})^{- \frac{1}{p-1}} \, dx\Big)^{p-1} \leq 2\, (b-a)^p.
\end{equation}
Note that for $t \in (0, b)$
\vspace{-3mm}
$$
\int_t^b \ln \frac{e}{x} \, dx = b \ln \frac{e}{b} - t \ln \frac{e}{t} + b - t
$$
and
\vspace{-2mm}
\begin{eqnarray*}
\int_t^b (\ln \frac{e}{x})^{- \alpha} \, dx
&=&
b \, (\ln \frac{e}{b})^{- \alpha} - t \, (\ln \frac{e}{t})^{- \alpha} - \alpha \, \int_t^b (\ln \frac{e}{x})^{- \alpha -1} \, dx \\
&\leq&
b \, (\ln \frac{e}{b})^{- \alpha} - t \, (\ln \frac{e}{t})^{- \alpha},
\end{eqnarray*}
where $\alpha>0.$ Since the functions
$$
\varphi_1(t) = \frac{b \ln (e/b)- t \ln (e/t) + b - t}{b - t}\;\;\mbox{and}\;\;
\varphi_2(t) = \frac{b \, (\ln (e/b))^{- \alpha} - t \, (\ln (e/t))^{- \alpha}}{b - t}$$
are both decreasing on the interval $(0, b)$ for every $0<b\le 1$
it follows that $\max_{0 < t < b} \varphi_1(t) = \varphi_1(0^+) = \ln(e^2/b)$ and
$\max_{0 < t < b} \varphi_2(t) = \varphi_2(0^+) = \ln^{- \alpha}(e/b)$.
Therefore, setting $\alpha = \frac{1}{p-1}$, for arbitrary $0\le a < b\le1$ we have
$$
\frac{1}{(b-a)^p} \int_a^b \ln \frac{e}{x} \, dx \, \Big( \int_a^b (\ln \frac{e}{x})^{- \frac{1}{p-1}} \, dx \Big)^{p-1}
\leq \ln \frac{e^2}{b} \, \left(\left( \ln \left(\frac{e}{b}\right)\right)^{-1/(p-1)} \right)^{p-1}
=\frac{\ln (e^2/b)}{\ln (e/b)} \leq 2,
$$
and inequality (\ref{33}) is proved.
\end{proof}

\begin{center}
{\bf 7. $Ces_p[0,1],$ $1<p<\infty,$  is not an interpolation space between $Ces_1[0,1]$ and $Ces_\infty[0,1]$}
\end{center}

\noindent
We start with two lemmas (it is instructive to compare the result from the first of them with
imbedding \eqref{imbedding9}).

\begin{lemma} \label{Lemma3} If $1 < p < \infty$, then
\begin{equation} \label{34}
Ces_p[0, 1] \not \hookrightarrow (Ces_1[0, 1], Ces_{\infty}[0,1])_{1-1/p, \infty}.
\end{equation}
\end{lemma}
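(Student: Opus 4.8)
The plan is to disprove the embedding \eqref{34} by exhibiting a family of functions on which the quotient of the two norms is unbounded. A natural candidate is the family of indicators $f_\delta=\chi_{[0,\delta]}$, $0<\delta<1$, letting $\delta\to0^+$. The heuristic is that the $Ces_p$-norm of $f_\delta$ behaves like a pure power $\delta^{1/p}$, whereas the $(1-1/p,\infty)$-norm picks up an extra logarithmic factor $(\ln(e/\delta))^{1/p}$ produced by the logarithmic weight hidden in $Ces_1=L_1(\ln 1/t)$ near the origin. First I would record the elementary computation $Cf_\delta(s)=1$ for $s\le\delta$ and $Cf_\delta(s)=\delta/s$ for $s>\delta$, which gives $\|f_\delta\|_{Ces(p)}^p=\delta+\frac{\delta-\delta^p}{p-1}\le C_p\,\delta$, hence $\|f_\delta\|_{Ces(p)}\le C_p^{1/p}\,\delta^{1/p}$.

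For the matching lower bound on the interpolation norm I would invoke inequality \eqref{21} from the proof of Lemma \ref{lemma1}, namely $K(t,f)\ge\tfrac13\,\|f\chi_{[0,\tau_1(t)]}\|_{Ces(1)}$ for $0<t\le1$. Since $\tau_1$ increases continuously from $0$ to $\tau_1(1)=1$, for each $\delta$ there is a unique $t=t_\delta\in(0,1]$ with $\tau_1(t_\delta)=\delta$; for this choice $f_\delta\chi_{[0,\tau_1(t_\delta)]}=\chi_{[0,\delta]}$, and using $Ces_1=L_1(\ln 1/u)$ one gets $\|\chi_{[0,\delta]}\|_{Ces(1)}=\int_0^\delta\ln\frac1u\,du=\delta\ln\frac e\delta$. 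Substituting into the definition of the $\sup$-norm yields $\|f_\delta\|_{1-1/p,\infty}\ge t_\delta^{1/p-1}K(t_\delta,f_\delta)\ge\tfrac13\,t_\delta^{1/p-1}\,\delta\ln\frac e\delta$.

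It then remains to compare the $t$-scale with the $\delta$-scale. The defining relation $\tau_1(t_\delta)=t_\delta/\ln(e/t_\delta)=\delta$ gives $\ln\frac e\delta=\ln\frac e{t_\delta}+\ln\ln\frac e{t_\delta}$, and since $1\le\ln\frac e{t_\delta}$ we have $\ln\frac e{t_\delta}\le\ln\frac e\delta\le 2\ln\frac e{t_\delta}$; multiplying by $\delta=t_\delta/\ln(e/t_\delta)$ shows $t_\delta\le\delta\ln\frac e\delta\le 2t_\delta$. Consequently $t_\delta^{1/p-1}\,\delta\ln\frac e\delta\ge t_\delta^{1/p-1}\,t_\delta=t_\delta^{1/p}\ge\big(\tfrac12\,\delta\ln\frac e\delta\big)^{1/p}$, whence $\|f_\delta\|_{1-1/p,\infty}\ge c_p\,\delta^{1/p}\,(\ln\frac e\delta)^{1/p}$. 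Dividing by the upper bound for $\|f_\delta\|_{Ces(p)}$, the quotient is bounded below by a constant times $(\ln\frac e\delta)^{1/p}\to\infty$ as $\delta\to0^+$, which contradicts any continuous embedding and proves \eqref{34}. The only delicate point is the bookkeeping of the logarithmic factors: one must match the $t$-scale to the $\delta$-scale through $\tau_1$ and verify that the powers of $\delta$ cancel exactly so that precisely the logarithm survives in the quotient; once this is arranged, all the remaining estimates are routine. (One could equally well use the middle-term bound \eqref{20} in place of \eqref{21}, since at $\tau_1(t)=\delta$ both terms of the $K$-functional in Theorem \ref{Thm3} are comparable.)
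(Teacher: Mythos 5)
Your proof is correct and follows essentially the same route as the paper: the same test functions $\chi_{[0,\delta]}$, the same key lower bound \eqref{21} (which the paper invokes via Theorem \ref{Thm4}, applicable since $\chi_{[0,\delta]}$ is decreasing), the same matching of scales $\tau_1(t_\delta)=\delta$ with $t_\delta\approx\delta\ln(e/\delta)$, and the same computation $\|\chi_{[0,\delta]}\|_{Ces(p)}\le (p')^{1/p}\delta^{1/p}$, yielding the unbounded quotient $(\ln(e/\delta))^{1/p}$. Only your closing parenthetical aside is inaccurate: at $\tau_1(t)=\delta$ the middle term of the $K$-functional in Theorem \ref{Thm3} vanishes, since $f_\delta\chi_{[\tau_1(t),\tau_2(t)]}=0$ a.e., so an argument based on \eqref{20} would require taking $\tau_1(t)$ strictly inside $(0,\delta)$, say $\tau_1(t)=\delta/2$.
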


\begin{proof}  Let us consider the family of characteristic functions
$f_s = \chi_{[0, s]}, 0 < s < 1$. As we know (cf. Theorem \ref{Thm4}),
$$
K(t, f_s; Ces_1, Ces_{\infty}) \geq \frac{1}{3} \| f_s \chi_{[0, \tau_1(t)]} \|_{Ces(1)} ~~ {\rm for ~ all} ~~ t > 0.
$$
Since
\begin{eqnarray*}
\| f_s \chi_{[0, \tau_1(t)]} \|_{Ces(1)}
&=&
\| \chi_{[0, \min(s, \tau_1(t))]} \|_{Ces(1)} = \| \chi_{[0, \min(s, \tau_1(t))]} \|_{L_1(\ln1/s)} \\
&=&
\int_0^{\min(s, \tau_1(t))} \ln \frac{1}{s} \, ds = \min(s, \tau_1(t)) \, \Big( \ln \frac{1}{\min(s, \tau_1(t))} + 1\Big),
\end{eqnarray*}
it follows that for all $t$ such that $\tau_1(t) \leq s$ we have
$\| f_s \chi_{[0, \tau_1(t)]} \|_{Ces(1)} \geq \tau_1(t) \, \ln \frac{1}{ \tau_1(t)}$.
Therefore, using the inequality $\tau_1^{-1}(s)\le s\ln(e/s)$ once again,
for $0 < s < e^{-1}$ we obtain
\begin{eqnarray*}
\| f_s\|_{(Ces_1, Ces_{\infty})_{1-1/p, \infty}}
&=&
\sup_{t > 0} t^{1/p-1} K(t, f_s; Ces_1, Ces_{\infty}) \\
&\geq&
\frac{1}{3} \, \sup_{t > 0, \tau_1(t) \leq s} t^{1/p-1}\, \tau_1(t)\, \ln \frac{1}{\tau_1(t)} \\
&\geq&
\frac{1}{3} \, \left(\tau_1^{-1}(s)\right)^{1/p-1} s \ln \frac{1}{s} \geq \frac{1}{6} \, (s \ln \frac{e}{s})^{1/p-1} s \ln \frac{1}{s} \\
&\geq&
\frac{1}{6} \, s^{1/p} (\ln \frac{e}{s})^{1/p}.
\end{eqnarray*}
On the other hand,
\begin{eqnarray*}
\| f_s\|_{Ces_p}
&=&
\Big[ \int_0^s \Big(\frac{1}{u} \int_0^u \chi_{[0, s]}(v)\, dv\Big)^p \, du + \int_s^1  \Big(\frac{1}{u} \int_0^u \chi_{[0, s]}(v)\, dv\Big)^p
 \, du \Big]^{1/p}\\
&=&
\Big( s + s^p \int_s^1 u^{-p} \, du\Big)^{1/p} = \Big(s + \frac{s^p}{p-1} \, (s^{1-p} - 1) \Big)^{1/p} \\
&=&
\Big(\frac{p}{p-1}\, s - \frac{1}{p-1}\, s^p\Big)^{1/p} \leq (p')^{1/p} \, s^{1/p}.
\end{eqnarray*}
Therefore, for $0 < s <  e^{-1}$
$$
\frac{\| f_s\|_{(Ces_1, Ces_{\infty})_{1-1/p, \infty}}}{\| f_s\|_{Ces_p}} \geq \frac{\frac{1}{6} \,
s^{1/p} (\ln \frac{e}{s})^{1/p}}{(p')^{1/p} \, s^{1/p}} \geq \frac{1}{6 p'} \, (\ln \frac{e}{s})^{1/p},
$$
whence
$$
\sup_{0<s<1}\frac{\| f_s\|_{(Ces_1, Ces_{\infty})_{1-1/p, \infty}}}{\| f_s\|_{Ces_p}}=\infty,
$$
which shows that (\ref{34}) holds.
\end{proof}

Recall that the {\it characteristic function} $\varphi(s, t)$ of an exact interpolation functor $\cal F$
is defined by the equality ${\cal F} ( s {\mathbb R}, t {\mathbb R}) = \varphi (s, t) \, {\mathbb R}$ for all $s, t > 0$.
By the Aronszajn--Gagliardo theorem (see \cite[Theorem~2.5.1]{BL} or \cite[Theorem~2.3.15]{BK}),
for arbitrary Banach couple $(X_0,X_1)$ and for every Banach space $X\in Int(X_0,X_1)$ there
is an exact interpolation functor $\cal F$ such that ${\cal F}(X_0,X_1)=X.$

\vspace{1mm}
\begin{lemma} \label{Lemma4} Let $1 < p < \infty.$ Suppose that the Ces{\`a}ro space
$Ces_p[0, 1] \in Int(Ces_1[0, 1], \newline Ces_{\infty}[0,1])$ 
and $\cal F$ is an exact interpolation functor such that
\begin{equation} \label{35}
{\cal F} (Ces_1[0, 1], Ces_{\infty}[0,1]) = Ces_p[0, 1].
\end{equation}
Then the characteristic function $\varphi(1, t)$ of $\cal F$ is equivalent to $t^{1/p}$ for $0 < t \leq 1$.
\end{lemma}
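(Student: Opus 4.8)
The plan is to read $\varphi(1,t)$ off the defining relation ${\cal F}(s\mathbb{R},t\mathbb{R})=\varphi(s,t)\,\mathbb{R}$ by squeezing it between a push-forward (orbit) lower bound and a pull-back (co-orbit) upper bound, both coming from exactness of ${\cal F}$ together with the identification ${\cal F}(Ces_1,Ces_\infty)=Ces_p$ (say with equivalence constant $C_0$). Since $\varphi$ is positively homogeneous of degree one and quasiconcave, it is enough to estimate it along a one-parameter family of sample points and then transport the information, by homogeneity, to the value $\varphi(1,t)$; the overall normalisation is fixed at the end by $\varphi(1,1)\approx 1$, reflecting $Ces_1\cap Ces_\infty=Ces_\infty$.

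First I would record the two consequences of exactness that I need. For $g\in Ces_\infty$ the rank-one map $\xi\mapsto\xi g$ has norm $\|g\|_{Ces_i}$ from the weighted line $\|g\|_{Ces_i}\mathbb{R}$ into $Ces_i$, so applying ${\cal F}$ gives
\[
\|g\|_{Ces_p}\le C_0\,\varphi\big(\|g\|_{Ces_1},\|g\|_{Ces_\infty}\big),
\]
a lower estimate for $\varphi$ at the point $(\|g\|_{Ces_1},\|g\|_{Ces_\infty})$. Dually, for a functional $\eta$ the map $y\mapsto\langle\eta,y\rangle$ has norm $\|\eta\|_{Ces_i'}$ from $Ces_i$ into the line, so the Calder\'on dual functor ${\cal F}'$ (whose characteristic function satisfies $\varphi_{{\cal F}'}(s,t)\approx st/\varphi(s,t)$, as the $L_p/L_{p'}$ test confirms, and which satisfies ${\cal F}'(Ces_1',Ces_\infty')\hookrightarrow Ces_p'$) yields the companion lower estimate $\|\xi\|_{Ces_p'}\le C\,\varphi_{{\cal F}'}(\|\xi\|_{Ces_1'},\|\xi\|_{Ces_\infty'})$ for $\xi\in Ces_1'\cap Ces_\infty'$. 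The first inequality will produce $\varphi(1,t)\gtrsim t^{1/p}$ and the second, through the reciprocal relation, $\varphi(1,t)\lesssim t^{1/p}$.

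The decisive choice is to concentrate the test elements at the right endpoint, $g_a=\chi_{[a,1]}$ with $r:=1-a\to0^+$, rather than at the origin: a direct computation gives the clean powers $\|g_a\|_{Ces_\infty}\approx r$, $\|g_a\|_{Ces_1}=1-a+a\ln a\approx r^2$ and $\|g_a\|_{Ces_p}\approx r^{1+1/p}$, with no logarithmic factor, because the weight $\ln(1/u)$ of $Ces_1=L_1(\ln 1/t)$ is harmless where $u\approx1$. Feeding these into the orbit inequality and using homogeneity, the sampled values $\varphi(r^2,r)=r\,\varphi(r,1)\gtrsim r^{1+1/p}$ reduce along the corresponding ray to the one-variable lower bound $\varphi(r,1)\gtrsim r^{1/p}$, i.e. the stated $\varphi(1,t)\gtrsim t^{1/p}$ on $(0,1]$. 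For the matching bound I would run the same argument for ${\cal F}'$ on the dual couple, testing against functions of $Ces_1'\cap Ces_\infty'$ concentrated near $1$; the conjugate exponents then give $\varphi_{{\cal F}'}(r,1)\gtrsim r^{1/p'}$, whence $\varphi(r,1)\approx r/\varphi_{{\cal F}'}(r,1)\lesssim r^{1/p}$. The explicit $K$-functional of Theorem~\ref{Thm3} is available at the matched scale $\tau_1(t)\approx r$ whenever it is more convenient than the three norms directly.

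The hard part is the upper estimate. One must either identify the K\"othe duals $Ces_1'$, $Ces_\infty'$, $Ces_p'$ precisely enough to evaluate the dual norms of the test functionals, or set up the Calder\'on dual functor and justify the reciprocal law $\varphi\,\varphi_{{\cal F}'}\approx st$ for this couple. In both routes the genuine technical point is that the logarithmic factors produced by the non--scale-invariant weight of $Ces_1$ must cancel so as to leave a pure power; this is exactly why the endpoint family $\chi_{[a,1]}$, where $\ln(e/t)\approx1$, is used instead of $\chi_{[0,s]}$ (the latter, as in Lemma~\ref{Lemma3}, inevitably carries a factor $\ln(e/s)$, so the naive push-forward then yields only the weaker bound $\varphi(1,t)\gtrsim t$). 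Once both one-sided estimates are secured, quasiconcavity and degree-one homogeneity of $\varphi$ upgrade the sample-point bounds to the equivalence $\varphi(1,t)\approx t^{1/p}$ for all $0<t\le1$.
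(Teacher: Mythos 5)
Your lower bound is sound and, in outline, complete: exactness of ${\cal F}$ applied to the rank-one map $\xi\mapsto\xi g_a$ from the couple of lines $\bigl(\|g_a\|_{Ces_1}\mathbb{R},\,\|g_a\|_{Ces_\infty}\mathbb{R}\bigr)$ into $(Ces_1,Ces_\infty)$, combined with the (correct) computations $\|g_a\|_{Ces_1}\approx r^2$, $\|g_a\|_{Ces_\infty}\approx r$, $\|g_a\|_{Ces_p}\approx r^{1+1/p}$ for $g_a=\chi_{[a,1]}$, $r=1-a$, gives $r\,\varphi(r,1)=\varphi(r^2,r)\gtrsim r^{1+1/p}$ uniformly in $r$. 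Your endpoint family $\chi_{[a,1]}$ is precisely the continuous analogue of the paper's dyadic blocks $g_k=\chi_{[1-2^{-k},1-2^{-k-1}]}$, and your remark that the weight $\ln(1/t)\approx 1-t$ of $Ces_1$ near $t=1$ is what removes the logarithm is the right key observation. (Like the paper, you are loose about the order of the arguments of $\varphi$: what is actually being estimated, under the convention ${\cal F}(s\mathbb{R},t\mathbb{R})=\varphi(s,t)\mathbb{R}$ and the couple order $(Ces_1,Ces_\infty)$, is the section $\varphi(\cdot,1)$ on $(0,1]$, which is the one used in Theorem~\ref{Thm7}.)

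The genuine gap is the upper estimate, which you yourself defer as ``the hard part''. As written, it rests on two unproved ingredients: (i) the existence of an exact dual functor ${\cal F}'$ obeying the reciprocal law $\varphi_{{\cal F}'}(s,t)\approx st/\varphi(s,t)$, and (ii) the embedding ${\cal F}'(Ces_1',Ces_\infty')\hookrightarrow Ces_p'$. Neither is off the shelf here: (i) needs the duality theory of interpolation functors on regular couples, and (ii) is a duality statement for this particular non-reflexive couple, where $(Ces_\infty[0,1])^*$ is not a function space, so passing between Banach duals and K\"othe duals must itself be justified; moreover the dual norms of your test functionals are never computed. This is exactly the half of the equivalence that the paper obtains by a completely different mechanism: it exhibits a complemented subcouple spanned by the dyadic blocks (isomorphic to a weighted couple $(l_1(2^{-k}),l_1)$) inside $(Ces_1|_{[1/2,1]},Ces_\infty|_{[1/2,1]})$, and the Baouendi--Goulaouic theorem transfers the hypothesis \eqref{35}, together with the identification of Remark~\ref{Remark new}, to that subcouple and then to each coordinate, giving the two-sided relation $\varphi(2^{-k},1)\approx 2^{-k/p}$ uniformly in $k$ --- both bounds at once, with no duality.

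For what it is worth, your scheme can be completed without any dual functor and without identifying K\"othe duals globally: apply exactness of ${\cal F}$ directly to the rank-one operator $x\mapsto\eta_a(x):=\int_a^1 x(t)\,dt$, viewed as a map from $(Ces_1,Ces_\infty)$ into the couple $(r\mathbb{R},\mathbb{R})$. Since $Ces_1=L_1(\ln(1/t))$ gives $\|\eta_a\|_{(Ces_1)^*}=1/\ln(1/a)\le 1/r$, and $Ces_\infty\stackrel{1}\hookrightarrow L_1$ gives $\|\eta_a\|_{(Ces_\infty)^*}\le 1$, this operator has norm at most $1$ on both spaces, so exactness and \eqref{35} yield $\varphi(r,1)\,\|\eta_a\|_{(Ces_p)^*}\le C_0$; testing $\eta_a$ against $f=\chi_{[a,1]}$ itself, with $\eta_a(f)=r$ and $\|f\|_{Ces_p}\approx r^{1+1/p}$, gives $\|\eta_a\|_{(Ces_p)^*}\gtrsim r^{-1/p}$, hence $\varphi(r,1)\lesssim r^{1/p}$. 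This one-paragraph co-orbit argument is the missing companion to your orbit bound; without it (or the paper's complementation argument) the lemma is only half proved.
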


\begin{proof} To simplify notation let us denote $V_p:=Ces_p|_{[1/2,1]}$ $(1 \leq p \leq \infty)$, that is,
$V_p$ is the subspace of $Ces_p[0,1]$, which consists of all functions $f$
such that ${\rm supp} \, f \subset [\frac{1}{2}, 1]$. Since $(V_1, V_{\infty})$ is a complemented couple
of the Banach couple $(Ces_1[0, 1], Ces_{\infty}[0, 1])$, by (\ref{35}) and equality in Remark 5, we obtain
\begin{equation} \label{36}
{\cal F} (V_1, V_{\infty}) = V_p = (V_1, V_\infty)_{1-1/p, p}.
\end{equation}
Consider the sequence of functions $g_k(t) = \chi_{[1- 2^{-k}, 1 - 2^{-k -1}]}(t), k = 1, 2, \ldots$ and the linear projection
$$
Pf(t) = \sum_{k=1}^{\infty} 2^{k+1} \int_{1- 2^{-k}}^{1 - 2^{-k -1}} f(s)\, ds\cdot g_k(t),\;\; f \in V_{\infty}.
$$
We have
\begin{eqnarray*}
\| Pf \|_{V_{\infty}}
&\leq&
2 \, \| Pf \|_{L_1|_{[1/2,1]}} \leq 2 \,  \sum_{k=1}^{\infty} 2^{k+1} \int_{1- 2^{-k}}^{1 - 2^{-k -1}} |f(s)|\, ds\cdot 2^{-k-1} \\
&=&
2 \, \| f \|_{L_1|_{[1/2,1]}} \leq 2 \, \| f \|_{V_{\infty}},
\end{eqnarray*}
and, since $1 - u \leq \ln (1/u) \leq 2(1-u)$ for $1/2 \leq u \leq 1$,
\begin{eqnarray*}
\| Pf \|_{V_1}
&\leq&
\sum_{k=1}^{\infty} 2^{k+1} \int_{1- 2^{-k}}^{1 - 2^{-k -1}} |f(s)|\, ds\cdot \int_{1- 2^{-k}}^{1 - 2^{-k -1}} \ln \frac{1}{t} \, dt \\
&\leq&
\sum_{k=1}^{\infty} 2^{k+2} \int_{1- 2^{-k}}^{1 - 2^{-k -1}} |f(s)|\, ds\cdot \int_{1- 2^{-k}}^{1 - 2^{-k -1}} (1-t) \, dt \\
&\leq&
\sum_{k=1}^{\infty} 2^{k+2} \cdot 2^{-2k-1} \cdot \int_{1- 2^{-k}}^{1 - 2^{-k -1}} |f(s)|\, ds\\
&\leq&
4 \, \sum_{k=1}^{\infty} \int_{1- 2^{-k}}^{1 - 2^{-k -1}} |f(s)| (1-s) \, ds \\
&\leq&
4 \, \sum_{k=1}^{\infty} \int_{1- 2^{-k}}^{1 - 2^{-k -1}} |f(s)| \ln\frac{1}{s} \, ds = 4 \, \| f \|_{L_1(\ln1/s)} = 4 \, \| f \|_{V_1}.
\end{eqnarray*}
Therefore, $P$ is a bounded linear projection from $V_{\infty}$ onto ${\rm Im} \, P_{{\large |}_{V_{\infty}}}$ and from $V_1$
onto ${\rm Im} \, P_{{\large |}_{V_1}}$. At the same time, it is easy to see that the sequence
$\{ 2^{k+1}\, g_k \}_{k=1}^{\infty}$ is equivalent in $V_{\infty}$ (resp. in $V_1$) to the standard basis in $l_1$
(resp. in $l_1(2^{-k})$). Hence, $(l_1, l_1(2^{-k}))$ is a complemented subcouple of the Banach couple
$(V_1, V_{\infty})$ and therefore, by (\ref{32}) and by the Baouendi-Goulaouic result
\cite[Theorem 1]{BG} (see also \cite[Theorem~1.17.1]{Tr}),
\begin{equation*}
{\cal F} (l_1, l_1(2^{-k})) = (l_1, l_1(2^{-k}))_{1-1/p, p}.
\end{equation*}
In particular, from the last relation it follows that
$$
{\cal F} ( {\mathbb R}, 2^{-k}\, {\mathbb R}) = ( {\mathbb R}, 2^{-k}\, {\mathbb R})_{1-1/p, p} = 2^{-k/p}{\mathbb R}
$$
uniformly in $k \in \mathbb N$. Since the characteristic function of any exact interpolation functor
is quasi-concave \cite[Proposition~2.3.10]{BK}, this implies the result.
\end{proof}

\begin{theorem} \label{Thm7} For any $1 < p < \infty$ the space $Ces_p[0,1]$ is not an interpolation space between
the spaces $Ces_1[0,1]$ and $Ces_\infty[0,1]$.
\end{theorem}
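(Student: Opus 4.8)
The plan is to derive a contradiction from the two preceding lemmas by exploiting the maximality of the $(\theta,\infty)$-method. Assume, contrary to the assertion, that for some $1<p<\infty$ the space $Ces_p[0,1]$ is an interpolation space between $Ces_1[0,1]$ and $Ces_\infty[0,1]$. Because of the chain $Ces_\infty[0,1]\stackrel{1}\hookrightarrow Ces_p[0,1]\stackrel{1}\hookrightarrow Ces_1[0,1]$ recorded in the introduction, $Ces_p[0,1]$ is intermediate with respect to the couple $(Ces_1[0,1],Ces_\infty[0,1])$, so the Aronszajn--Gagliardo theorem (quoted just before Lemma \ref{Lemma4}) furnishes an exact interpolation functor $\mathcal{F}$ with
$$
\mathcal{F}(Ces_1[0,1],Ces_\infty[0,1])=Ces_p[0,1].
$$
By Lemma \ref{Lemma4}, the characteristic function of this $\mathcal{F}$ satisfies $\varphi(1,t)\approx t^{1/p}$ for $0<t\le1$, that is, it coincides up to equivalence with the characteristic function of the $K$-functor $(\cdot,\cdot)_{1-1/p,\infty}$.

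The second step is to transfer this information into an embedding. I would use the classical fact that, for $\theta=1-1/p$, the functor $(\cdot,\cdot)_{\theta,\infty}$ produces the largest exact interpolation space carrying its characteristic function, so that any exact $\mathcal{F}$ with the same characteristic function obeys $\mathcal{F}(\bar X)\hookrightarrow(\bar X)_{\theta,\infty}$ for every couple $\bar X$ (see \cite{BK}). Concretely, this follows from the $K$-functional duality combined with exactness: every $g$ with $\|g\|_{(Ces_1)'}\le1$ and $\|g\|_{(Ces_\infty)'}\le t$ defines, via $h\mapsto\int_0^1 h g$, an operator of norm $\le1$ from the couple $(Ces_1[0,1],Ces_\infty[0,1])$ into the one-dimensional couple $(\mathbb{R},t\mathbb{R})$; running it through $\mathcal{F}$ and using $\mathcal{F}(\mathbb{R},t\mathbb{R})=\varphi(1,t)\mathbb{R}$ bounds $|\int_0^1 f g|$ by a fixed multiple of the weight $t^{1-1/p}$ that defines the norm of $(\cdot,\cdot)_{1-1/p,\infty}$, times $\|f\|_{Ces(p)}$. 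Taking the supremum over all admissible $g$ identifies the left-hand side with $K(t,f;Ces_1,Ces_\infty)$, and after dividing by $t^{1-1/p}$ and passing to the supremum over $t$ I obtain
$$
Ces_p[0,1]\stackrel{C}\hookrightarrow(Ces_1[0,1],Ces_\infty[0,1])_{1-1/p,\infty}.
$$

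This last embedding is in direct contradiction with Lemma \ref{Lemma3}, which states precisely that $Ces_p[0,1]\not\hookrightarrow(Ces_1[0,1],Ces_\infty[0,1])_{1-1/p,\infty}$. Hence no exact functor $\mathcal{F}$ realising $Ces_p[0,1]$ as an interpolation space can exist, and therefore $Ces_p[0,1]$ is not an interpolation space between $Ces_1[0,1]$ and $Ces_\infty[0,1]$, as claimed.

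I expect the only delicate point to be the transfer step: one must match the characteristic function $\varphi(1,t)$ produced by Lemma \ref{Lemma4} against the weight $t^{1-1/p}$ occurring in the norm of $(\cdot,\cdot)_{1-1/p,\infty}$, with the correct normalization of the one-dimensional target couple $(\mathbb{R},t\mathbb{R})$, and to justify the duality expression for $K(t,\cdot;Ces_1,Ces_\infty)$ through the K\"othe duals $(Ces_1)'$ and $(Ces_\infty)'$. Granting Lemmas \ref{Lemma3} and \ref{Lemma4}, the remainder is a routine assembly.
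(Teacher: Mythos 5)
Your argument is correct and is essentially the paper's own proof: assume $Ces_p[0,1]$ is an interpolation space, invoke Aronszajn--Gagliardo, pin down the characteristic function via Lemma \ref{Lemma4}, deduce $Ces_p[0,1]\hookrightarrow(Ces_1[0,1],Ces_\infty[0,1])_{1-1/p,\infty}$ from the maximality of the $(\cdot,\cdot)_{\theta,\infty}$-method, and contradict Lemma \ref{Lemma3}. The only difference is that the paper cites this maximality embedding directly (\cite[Proposition 3.8.6]{BK}, with $\psi(t)=\min(1,t^{1/p})$, the truncation being harmless since $Ces_\infty[0,1]\stackrel{1}\hookrightarrow Ces_1[0,1]$ makes the couple ordered so only $0<t\le1$ matters), whereas you sketch its standard proof through one-dimensional couples; your sketch is sound, though it is cleaner to use Banach duals, for which Hahn--Banach gives exactly $K(t,f)=\sup\{|\langle f,x^*\rangle|:\|x^*\|_{Ces_1^*}\le1,\ \|x^*\|_{Ces_\infty^*}\le t\}$, rather than K\"othe duals, which additionally require justifying that integral functionals suffice to compute the $K$-functional (Fatou property of the sum space).
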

\begin{proof}
Assume that $Ces_p[0,1]$ is an interpolation space between $Ces_1[0,1]$ and $Ces_{\infty}[0,1]$.
Then there is an exact interpolation functor $\cal F$ such that equality \eqref{35} holds.
By Lemma \ref{Lemma4}, the characteristic function $\varphi(1, t)$ of $\cal F$ is equivalent to $t^{1/p}$ for $0 < t \leq 1$.
Therefore, for any Banach couple $(X_0, X_1)$ we have
\begin{equation} \label{37}
{\cal F} (X_0, X_1) \subset (X_0, X_1)_{\psi, \infty},
\end{equation}
where $(X_0, X_1)_{\psi, \infty}$ is the real interpolation space consisting of all $x \in X_0 + X_1$
such that
$\sup_{t > 0} \frac{\psi(t)}{t}\, K(t, x; X_0, X_1) < \infty$ and $\psi(t) = \min(1, t^{1/p})$
\cite[Proposition 3.8.6]{BK}. Since  $Ces_\infty[0, 1]  \stackrel {1} \hookrightarrow Ces_{1}[0, 1]$, then applying (\ref{37})
to the couple $(Ces_1[0,1], Ces_{\infty}[0,1])$, we obtain
\begin{equation} \label{38}
{\cal F} (Ces_1[0,1], Ces_{\infty}[0,1]) \subset (Ces_1[0,1], Ces_{\infty}[0,1])_{1-1/p, \infty},
\end{equation}
whence $Ces_p[0,1] \subset (Ces_1[0,1], Ces_{\infty}[0,1])_{1-1/p, \infty}$. But in view of Lemma \ref{Lemma3} the
last imbedding does not hold,  and the proof is complete.
\end{proof}


\vspace{3mm}

\noindent
{\footnotesize Department of Mathematics and Mechanics, Samara State University\\
Acad. Pavlova 1, 443011 Samara, Russia}\\
 ~{\it e-mail address:} {\tt astashkn@ssu.samara.ru} \\

\vspace{-3mm}

\noindent
{\footnotesize Department of Engineering Sciences and Mathematics\\
Lule\r{a} University of Technology, SE-971 87 Lule\r{a}, Sweden} \\
~{\it e-mail address:} {\tt lech.maligranda@ltu.se} \\

\end{document}